\documentclass[12pt]{amsart}

\usepackage{verbatim, amssymb, enumitem, mathtools,color}
\usepackage{xcolor,booktabs,tabularx,array,enumitem,fancyhdr,microtype,needspace}

\usepackage[breaklinks=true,colorlinks=true,linkcolor=blue,citecolor=red,urlcolor=blue,psdextra,pdfencoding=auto]{hyperref}

\newcommand{\kk}{\mathfrak k}

\newcommand \br{\mathbb{R}}

\newcommand{\dalg}{\mathfrak d}

\newcommand \Der{\operatorname{Der}}

\newcommand \codim{\operatorname{codim}}

\newcommand \Span{\operatorname{Span}}

\newcommand\g{\mathfrak g}

\newcommand\h{\mathfrak h}
\newcommand\z{\mathfrak z}
\newcommand\m{\mathfrak m}
\newcommand \so{\mathfrak{so}}

\newcommand \vg{\mathfrak{v}}
\newcommand \n{\mathfrak{n}}

\newcommand \ad{\operatorname{ad}}
\newcommand \Ad{\operatorname{Ad}}

\DeclareMathOperator{\rank}{rank}

\newcommand \<{\langle}
\renewcommand \>{\rangle}
\newcommand \ip{\<\cdot,\cdot\>}

\newtheorem{theorem}{Theorem}
\newtheorem*{theorem*}{Theorem}
\newtheorem{corollary}{Corollary}
\newtheorem*{corollary*}{Corollary}
\newtheorem*{conj*}{Conjecture}
\newtheorem{lemma}{Lemma}
\newtheorem{proposition}{Proposition}
\newtheorem*{prop*}{Proposition}
\newtheorem*{GeoL}{Geodesic Lemma \cite{DK}}

\theoremstyle{definition}

\newtheorem*{definition*}{Definition}

\theoremstyle{remark}
\newtheorem{remark}{Remark}

\newtheorem{example}{Example}
\newtheorem*{notation*}{Notation}
\newtheorem*{algorithm*}{Algorithm}
\newtheorem*{example*}{Example}
\DeclareMathOperator{\ind}{ind}
\DeclareMathOperator{\rad}{rad}

\makeatletter
\@namedef{subjclassname@2020}{%
  \textup{2020} Mathematics Subject Classification}
\makeatother

\begin{document}

\title[Pseudo-Riemannian geodesic orbit nilmanifolds of signature $(n-3,3)$]{Pseudo-Riemannian geodesic orbit nilmanifolds of signature $\boldsymbol{(n-3,3)}$}

\author{Zhiqi Chen}
\address{School of Mathematics and Statistics, Guangdong University of Technology, Guangzhou, 510520, P.R. China}
\email{chenzhiqi@gdut.edu.cn}

\author{Shaoxiang Zhang}
\address{College of Mathematics and Systems Science, Shandong University of Science and Technology, Qingdao 266590, P.R. China}
\email{zhangsx@sdust.edu.cn}

\author{Yiyi Zhu}
\address{School of Mathematics and Statistics, Guangdong University of Technology, Guangzhou, 510520, P.R. China}
\email{yzhu51@ucsc.edu}

\subjclass[2020]{53C30, 53B30, 17B30}

\keywords{pseudo Riemannian nilmanifold, geodesic orbit manifold}

\begin{abstract} 
The geodesic orbit property is useful and interesting in Riemannian geometry. It implies homogeneity and has important classes of Riemannian manifolds as special cases, such as weakly symmetric Riemannian manifolds and naturally reductive Riemannian manifolds.  The corresponding results for indefinite metric manifolds are much more delicate than in Riemannian signature, but in the last few years important corresponding structural results were proved for geodesic orbit Lorentz and trans-Lorentz manifolds. 
Here we study pseudo-Riemannian geodesic orbit nilmanifolds of metric index three. Those are the geodesic orbit pseudo-Riemannian manifolds $M = G/H$ of signature $(n-3,3)$ such that a nilpotent analytic subgroup of $G$ is transitive on $M$.  Suppose that there is a reductive decomposition $\g = \h \oplus \n$ (vector space direct sum) with $\n$ nilpotent. When the metric is nondegenerate on $[\n,\n]$ we show that $\n$ is abelian, $2$-step  or $4$-step nilpotent. In contrast to the Riemannian, Lorentzian, and trans-Lorentz cases, the two-step conclusion therefore fails, but $3$-step nilpotency remains impossible. The $4$-step case is confined to a Lorentzian derived algebra with an orthogonal complement of index two, and its structure forces an invariant totally isotropic two-plane in that complement.  An explicit nine-dimensional example of signature $(6,3)$ proves sharpness. When the metric is degenerate on $[\n,\n]$ we prove the existence of an invariant isotropic subspace that centralizes its orthogonal complement, yielding a double-extension reduction to a geodesic orbit metric nilpotent Lie algebra of strictly smaller index. An eight-dimensional example shows that this relative centrality need not imply centrality in the full Lie algebra.
\end{abstract}

\maketitle

\section{Introduction}
\label{s:intro}

A pseudo-Riemannian manifold $(M,ds^2)$ is called a \emph{geodesic orbit manifold} (or a manifold with homogeneous geodesics, or simply a $GO$ manifold), if every geodesic of $M$ is an orbit of a $1$-parameter subgroup of the full isometry group $I(M) = I(M,ds^2)$. One loses no generality
if one replaces $I(M)$ by its identity component $I^0(M)$.  If $G$ is a transitive Lie subgroup of $I^0(M)$, so $(M,ds^2) = (G/H,ds^2)$ where $H$ is an isotropy subgroup of $G$, and if every geodesic of $M$ is an orbit of a $1$-parameter subgroup of $G$, then we say that $(M,ds^2)$ is a \emph{$G$-geodesic orbit manifold}, or a $G$-$GO$ manifold. Clearly every $G$-$GO$ manifold is a $GO$ manifold, but not vice versa. The class of geodesic orbit manifolds includes (but is not limited to) symmetric spaces, weakly symmetric spaces, normal and generalized normal homogeneous spaces, and naturally reductive spaces. For the current state of knowledge in the theory of Riemannian geodesic orbit manifolds we refer the reader to \cite{BN} and its bibliography, for a survey of the reductive pseudo-Riemannian nilmanifold setting, see \cite{CNWZsurvey}.

Recent work also illustrates the richness of the two-step theory. Nikonorov \cite{Nik24} constructs continuous families of pairwise non-isomorphic nilpotent Lie groups admitting Riemannian GO metrics, while Nikolayevsky and Ziller \cite{NZ26} classify non-singular Riemannian GO nilmanifolds. In indefinite signature, Furutani, Markina, and Nikonorov \cite{FMN} characterize the GO property for pseudo-$H$-type groups constructed from minimal admissible Clifford modules. These results concern two-step structures. 

In this paper, we study the $GO$ condition for pseudo-Riemannian nilmanifolds $(N,ds^2)$, relative to subgroups $G \subset I(N)$ of the form $G = N \rtimes H$, where $H$ is an isotropy subgroup. Most of our results apply to the case where $(N,ds^2)$ is a \emph{pseudo-Riemannian manifold of metric index three}, that is, the signature of $ds^2$ is $(n-3, 3)$, where $n = \dim N$.

Our results for $G$-$GO$ manifolds $(M,ds^2) = (G/H,ds^2)$ require the coset space $G/H$ to be reductive. In other words, they make use of an $\Ad_G(H)$-invariant decomposition $\g = \m \oplus \h$. Very few structural results are known for indefinite metric $GO$ manifolds that are not reductive, and we always assume that $G/H$ is reductive (see the discussion below).

The $GO$ condition for reductive spaces is well known: 
\begin{GeoL} \label{GeoL}
Let $(M,ds^2)=G/H$ be a reductive pseudo-Riemannian homogeneous space, with the corresponding reductive decomposition $\g = \h \oplus \m$. Then $M$ is a $G$-geodesic orbit space if and only if, for any $T \in \m$, there exist $A = A(T) \in \h$ and $k=k(T) \in \br$ such that if $T' \in \m$ then
\begin{equation}\label{eq:golemma}
  \<[T+A,T']_\m,T\> = k \<T,T'\>,
\end{equation}
where $\ip$ denotes the inner product on $\m$ defined by $ds^2$, and the subscript ${}_\m$ in~\eqref{eq:golemma} means taking the $\m$-component in $\g = \h \oplus \m$.
\end{GeoL}
Note that $k(T)=0$ unless $T$ is a null vector (substitute $T'=T$ in~\eqref{eq:golemma}).

Recall that a pseudo-Riemannian \emph{nilmanifold} is a pseudo-Riemannian manifold admitting a transitive nilpotent Lie group of isometries. In the Riemannian case, the full isometry group of a nilmanifold $(N,ds^2)$, where $N$ is a transitive nilpotent group of isometries, is the semidirect product $I(N) = N \rtimes H$, where $H$ is the group of all isometric automorphisms of $(N,ds^2)$ \cite[Theorem~4.2]{W1963}. In other words, $N$ is the nilradical of $I(N)$. In the pseudo-Riemannian cases, $I(N)$ might still contain $N \rtimes H$ and yet be strictly larger. In indefinite signatures of metric a nilmanifold is not necessarily reductive as a coset space of $I(N)$, and even when it is, $N$ does not have to be a normal subgroup of $I(N)$.  Here the $GO$ condition does not rescue us, for there exist $4$-dimensional, Lorentz $GO$ nilmanifolds that are reductive relative to $I(N)$, but for which $N$ is not an ideal in $I(N)$ \cite[Section~3]{dBO}. Moreover, already in dimension $4$ (the lowest dimension for homogeneous pseudo-Riemannian spaces $G/H$ with
$H$ connected that are not reductive), every non-reductive space is a $GO$ manifold when we make a correct choice of parameters~\cite[Theorem~4.1]{CFZ}. These results explain (and motivate) our study of $G$-$GO$ nilmanifolds $G/H = (N \rtimes H)/H$, where $N$ is nilpotent, and $N$ is \emph{the maximal} connected subgroup of isometric automorphisms of $(N,ds^2)$ (although our results remain valid for a smaller subgroup $H$).

Given a reductive $G$-$GO$ pseudo-Riemannian nilmanifolds $(G/H,ds^2)$, where $G = N \rtimes H$, with $N$ nilpotent, and the corresponding reductive decomposition $\g = \h \oplus \n$ at the level of Lie algebras, we denote $\ip$ the inner product on $\n$ induced by $ds^2$, and by $\ip'$, the restriction of $\ip$ to the derived algebra $\dalg=[\n,\n]$. The index is the number of negative squares. Gordon's theorem gives nilpotency step at most two in the Riemannian case \cite{Gor}. Under the assumption that $\dalg$ is nondegenerate, the same conclusion holds in Lorentz signature \cite{NW} and in index two \cite{CNWZ}. The earlier Lorentzian analysis \cite{CWZ} left a possible four-step branch, which was subsequently excluded in \cite{NW}. Nevertheless, \cite[Section~2.3]{CNWZ} constructs a four-step example of signature $(8,4)$ with nondegenerate Lorentzian derived algebra. These results leave a precise question: does the two-step restriction persist in index three?

Our first result answers this question negatively and identifies the only signature distribution in which it fails. The significance is the sharp transition from two-step rigidity to four-step behavior. In the nondegenerate-derived setting, index three is the smallest ambient index allowing step greater than two. Moreover, the transition skips step three. The example improves the ambient index of the earlier construction from four to three.

The structure of the paper is as follows.

In Section~\ref{s:nondeg} we prove Theorem~\ref{th:nondeg} which extends the results of~\cite[Theorem~2.2]{Gor} (for the Riemannian signature) and of~\cite[Theorem~2]{NW} and \cite[Theorem~7]{CWZ} (for the Lorentz signature) and of~\cite[Theorem~1]{CNWZ} (for the trans-Lorentz signature) to the metric index three settings. Given a reductive $G$-$GO$ pseudo-riemannian nilmanifolds $(G/H,ds^2)$ with metric index three, where $G = N \rtimes H$, we prove that if the restriction $\ip'$ is \emph{nondegenerate} then $N$ is abelian, $2$-step or $4$-step nilpotent. The proof is split into two parts given in Subsections~\ref{ss:dl} and~\ref{sec:lorentz-derived}  depending on the signature of $\ip'$.  In Subsection~\ref{ss:ex} An explicit nine-dimensional example of signature $(6,3)$ proves sharpness, which is $4$-step nilpotent.

In Section~\ref{sec:degenerate} we extend the result of~\cite[Theorem~3]{NW} (for the Lorentz signature) and of ~\cite[Theorem~2]{CNWZ} (for the trans-Lorentz signature) to the metric index three settings: in Theorem~\ref{thm:degenerate-complete} we prove that if the restriction $\ip'$ is \emph{degenerate} then $\n$ can be obtained by the \emph{double extension} procedure from a metric Lie algebra of strictly smaller index corresponding to a $GO$ nilmanifold. The construction of double extension  is a well-known tool in pseudo Riemannian homogeneous geometry, in particular, in the theory of bi-invariant metrics (due to Medina and Revoy \cite{MR}; see \cite{Ova} for a survey) and in the context of $GO$ nilmanifolds \cite[Section~4]{NW} and  \cite[Section~3]{CNWZ}.

In Section~\ref{s:double} we give an eight-dimensional example to show why centrality in the orthogonal complement cannot generally be strengthened to global centrality.



\section{The derived algebra is nondegenerate} 
\label{s:nondeg}

Given a reductive homogeneous pseudo-Riemannian manifold $(G/H,ds^2)$, where $G = N \rtimes H$, with $N$ nilpotent, we identify $\n = {\rm Lie}(N)$ with the tangent space to $G/H$ at $1N$. Let $\ip$ be the inner product on $\n$ induces by $ds^2$, and denote $\dalg = [\n,\n]$. Here the index is the number of negative squares, let $s=\ind \dalg, t=\ind \vg$.

Assume that the restriction $\ip'$ of the inner product $\ip$ to $\dalg$ is nondegenerate. Denote $\vg=(\dalg)^\perp$; note that $\n$ is the direct orthogonal sum of $\dalg$ and $\vg$, and both subspaces $\dalg$ and $\vg$ of $\n$ are $\ad_\g(\h)$-invariant.

We prove the following.

\begin{theorem}\label{th:nondeg} 
     Let $(M = G/H, ds^2)$ be a connected pseudo-Riemannian $G$-geodesic orbit nilmanifold of metric index three, where $G = N \rtimes H$, with $N$ nilpotent. Let $\ip$ denote the inner product on $\n$ induced by $ds^2$. If $\ip|_{\dalg}$ is nondegenerate, then $N$ is abelian, two-step or four-step nilpotent.
\end{theorem}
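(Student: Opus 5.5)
We will follow the scheme used for the Lorentz and trans-Lorentz cases in \cite{NW} and \cite{CNWZ}. Since $\ip'$ is nondegenerate, $\n=\dalg\oplus\vg$ orthogonally, both summands are $\ad(\h)$-invariant, and $s+t=3$. The first step is to extract from the Geodesic Lemma the standard identities. For $T=X+Y$ with $X\in\vg$, $Y\in\dalg$, we take $T'$ in $\dalg$ and then in $\vg$, and separate homogeneous components by scaling $X$ and $Y$; here we use that $k(T)=0$ unless $T$ is null. This should give, for generic $X,Y$ and with $A=A(X+Y)\in\h$,
\[
[A,Y]=0 \text{ (on } \dalg\text{ modulo the }k\text{-term)},\qquad \<[X,Z],Y\>=\<[A,X],Z\>\ \ (Z\in\vg),
\]
together with a relation saying that $\ad(Y)$ restricted to the derived part is skew modulo $\ad(\h)$. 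The key consequence is that for each $Y\in\dalg$ the operator $J_Y\in\mathfrak{so}(\vg)$, defined by $\<J_YX,Z\>=\<[X,Z],Y\>$, lies in $\ad(\h)|_\vg$ evaluated along $X$. Moreover, $\ad(\h)$ acts on $\dalg$ by skew derivations that annihilate generic $Y$.

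Next we split into cases by $s=\ind\dalg\in\{0,1,2,3\}$. If $s=0$, the restriction of $\ad(\h)$ to $\dalg$ lies in a compact algebra, and the argument of Gordon \cite{Gor} adapts: the derivations $\ad(A)|_\dalg$ fixing generic vectors, combined with nilpotency of $\ad(\dalg)$ acting on $\dalg$ through skew operators (compactness forces nilpotent skew maps to vanish), give $[\dalg,\n]\subset$ a proper invariant piece, and an induction shows $[\n,\dalg]=0$. Hence the algebra is two-step or abelian. If $s=3$, then $\vg$ is Euclidean, so the $J_Y$ generate a compact algebra, and the same argument applies with the roles reversed. For $s=2$, $\vg$ is Lorentzian; for $s=1$, $\dalg$ is Lorentzian and $\vg$ has index two. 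In each of these cases we reduce to the index-$\le2$ analysis of \cite{CNWZ}: we decompose $\dalg$ under the nilpotent operators $\ad(Y)|_\dalg$, which are skew modulo $\ad(\h)$, and use the facts that a nilpotent skew map on a space of index $\le 1$ has image totally isotropic of dimension $\le1$ and cube zero.

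The hardest case, and the one in which four-step nilpotency must appear, is $s=1$ (Lorentzian $\dalg$, index-two $\vg$). Here we aim to show the following. The lower central series terms $\n^k\cap\dalg$ are $\ad(\h)$-invariant, and $\dalg^2=[\n,\dalg]$ is degenerate with radical spanned by one null vector $e$. The operators $J_Y$ with $Y\in\dalg$ then preserve a totally isotropic plane in $\vg$. Using the nilpotency index of skew endomorphisms in signature $(\cdot,1)$, namely $(\ad Y)^3=0$ on $\dalg$ and $\ad$-images at most one isotropic line deep, we bound the filtration to $\n\supset\dalg\supset\n^3\supset\n^4\supset 0$ with $\n^4\subset\br e$ and $\n^5=0$. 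Separately, we exclude step three: if $\n^4=0\neq\n^3$, then pairing the identity $\<[X,Z],Y\>=\<[A,X],Z\>$ with $Y=e$ and using that $e$ spans the radical forces $[\vg,\vg]$ to be orthogonal to $e$, which contradicts $\n^3\ni e$ being in the image of $[\vg,\vg]$ by nondegeneracy of $\dalg$. The main obstacle is making this last exclusion rigorous, since the $k$-term appears exactly for the null vectors involved; for that reason we will track $k(T)$ carefully when $T=X+e$ is null.
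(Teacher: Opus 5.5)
Your proposal has genuine gaps, and the first is in the identity on which everything rests. Test a non-null $T=X+Y$ ($X\in\vg$, $Y\in\dalg$) against $T'=X$. This gives $\<[X,Y],Y\>=0$, so $\ad(X)|_{\dalg}$ is genuinely skew for $X\in\vg$, and hence for every $Z\in\n$, since $\vg$ generates $\n$. Thus $\phi(Z)=\ad(Z)|_{\dalg}$ is a homomorphism into nilpotent elements of $\so(\dalg)$, and the $\dalg$-test then gives $[A+X,Y]=0$, not $[A,Y]=0$; in particular $A$ does not annihilate generic $Y$. This settles definite $\dalg$ at once.

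For $s=3$ and $s=2$, however, you must prove $[\n,\dalg]=0$ although $\dalg$ has index $3$ or $2$. Nonzero nilpotent skew operators exist there, in index two even with nonzero cube, so the index-$\le1$ facts you invoke do not apply to $\dalg$. Compactness of $\so(\vg)$ or of the $J_Y$ does not by itself control $\phi(X)$ on the indefinite space $\dalg$. Nothing in your sketch lowers the total index so as to land in the trans-Lorentz analysis.

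The missing idea is an auxiliary positive definite form on $\n$. If $\vg$ is definite, its metric, made positive by a sign change, is $\h$-invariant. The nested-bracket map $\bigoplus_j\vg^{\otimes j}\to\n$ is surjective and $\h$-equivariant, so it induces an $\h$-invariant definite form $B$ on $\n$. Then $[X,Y]=-[A,Y]$ gives $B([X,Y],Y)=0$ on a dense set, so the nilpotent $\phi(X)$ is $B$-skew and vanishes. If $\vg$ is Lorentzian, fix a timelike $X$ and use its stabilizer $\h_X$. It preserves the definite form obtained on $\vg=\br X\perp(X^{\perp}\cap\vg)$ by flipping the sign on $\br X$. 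An element $A\in\h_X$ with $[A,Y]=-[X,Y]$ is the difference quotient of GO solutions for $Y+t_1X$ and $Y+t_2X$. Openness of the timelike cone then gives $\phi(\vg)=0$.

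In the Lorentzian-$\dalg$ case, vanishing cubes of single operators do not bound the lower central series. The identity $C^5\n=\phi(\n)^3\dalg=0$ requires $\phi(Z_1)\phi(Z_2)\phi(Z_3)=0$ for distinct $Z_i$. This follows from conjugating the nilpotent algebra $\phi(\n)$ into the abelian Iwasawa nilradical of $\so(\dalg)$, i.e.\ null translations with a common null line.

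Your exclusion of step three does not work as described. In $\<[X,Z],e\>=\<[A,X],Z\>$ the right-hand side has no reason to vanish, and you never say where $C^4\n=0$ enters. As written, the argument would equally yield $[\vg,\vg]\perp e$ in the four-step example of Subsection~\ref{ss:ex}, where $e=b$ spans the radical of $C^3\n=\Span(u_1,u_2,b)$ but $\<[p_1,p_2],b\>=1$.

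Once $\phi(\n)\subset\so(\dalg)$ is known, no further GO input is needed. A nonzero nilpotent skew operator $N$ on a Lorentzian space has $N^2\neq0$: a square-zero skew operator has totally isotropic image, hence rank at most one, hence is zero because skew rank is even. So $C^3\n=\phi(\n)\dalg\neq0$ forces $C^4\n\supset\phi(Z)^2\dalg\neq0$ for some $Z$. In the normal form \eqref{eq:L-normal} this reads $[Z,[Z,e_1]]=-\|\Psi Z\|^2e_m$.
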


Let $(G/H,ds^2)$ be $G$-geodesic orbit.

\begin{proposition}\label{prop:universal}
If $\dalg$ is nondegenerate, then
\begin{equation}\label{eq:skew-derived}
\<[Z,X], {X}\>=0, \qquad \forall Z\in\n,\ X\in\dalg.
\end{equation}
Consequently,
\[
\phi:\n\longrightarrow\so(\dalg),\qquad \phi(Z)=\ad(Z)|_{\dalg}
\]
is a Lie algebra homomorphism whose values are nilpotent endomorphisms. For $X\in\dalg$ and $Y\in\vg$ with $X+Y$ non-null, one may choose a single $A=A(X,Y)\in\h$ satisfying
\begin{align}
\<[A,Y], {Y'}\>&=\<[Y,Y'], {X}\>, \qquad \forall Y'\in\vg,\label{eq:GO-v}\\
[A+Y,X]&=0.\label{eq:GO-d}
\end{align}
\end{proposition}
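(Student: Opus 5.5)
Apply the Geodesic Lemma to $T=X+Y$ with $X\in\dalg$, $Y\in\vg$, and test against $T'\in\dalg$ and $T'\in\vg$ separately. We use three facts throughout: $\n$ is an ideal of $\g$, so $[T+A,T']_\n=[T+A,T']$; $\ad(A)$ is skew for every $A\in\h$ and preserves both $\dalg$ and $\vg$; and $[\n,\n]\subset\dalg$. Hence for $T'\in\n$
\[
\<[X+Y+A,T'],X+Y\>=\<[X+Y,T'],X\>+\<[A,T'],X+Y\>=k\<X+Y,T'\>.
\]

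For $T'=Y'\in\vg$ this reads $\<[X+Y,Y'],X\>+\<[A,Y'],Y\>=k\<Y,Y'\>$. For $T'=X'\in\dalg$ it reads $\<[X+Y,X'],X\>+\<[A,X'],X\>=k\<X,X'\>$. In the second identity, by skewness, $\<[A,X'],X\>=-\<X',[A,X]\>$ and $\<[X+Y,X'],X\>=-\<X',[X+Y,X]\>$ holds only if $\ad$ is skew, which we do not yet know. So we first establish \eqref{eq:skew-derived}.

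\textbf{Step 1.} Take $Y=0$ and $X\in\dalg$ non-null, so $k=0$. The $\dalg$-equation gives $\<[X,X'],X\>=\<X',[A,X]\>$ for all $X'\in\dalg$. Now let $Z\in\n$ be arbitrary. Since $[Z,X]\in\dalg$ we cannot directly substitute $Z$ for $X'$. Instead, use $T=X+tZ$ with $Z\in\vg$ and $t$ small, so that $T$ is non-null. Test with $T'=X$, and use $[X,X]=0$ and skewness of $\ad A$, to obtain $\<[tZ,X],X+tZ\>=0$. The component $[Z,X]$ lies in $\dalg$ and $\<\dalg,\vg\>=0$, so $t\<[Z,X],X\>=0$. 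Thus $\<[Z,X],X\>=0$ for $Z\in\vg$ and $X$ non-null; by continuity and density of non-null vectors (using nondegeneracy of $\dalg$) it holds for all $X\in\dalg$. For $Z\in\dalg$ the same test with $T=X+Z$, $T'=X$ gives $\<[Z,X],X+Z\>=0$. Polarizing in $Z$ ($Z\mapsto sZ$, comparing orders in $s$) yields $\<[Z,X],X\>=0$. Since $\n=\dalg\oplus\vg$, this proves \eqref{eq:skew-derived}. Polarization in $X$ then shows that each $\ad(Z)|_{\dalg}$ is skew. So $\phi$ maps into $\so(\dalg)$, is a homomorphism because $\dalg$ is an ideal (Jacobi), and has nilpotent values since $\n$ is nilpotent.

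\textbf{Step 2.} With skewness in hand, the $\dalg$-test becomes $-\<X',[X+Y+A,X]\>=k\<X,X'\>$ for all $X'\in\dalg$. Nondegeneracy of $\dalg$ gives $[A+Y,X]=-kX$ (note $[X,X]=0$). Since $\ad(A+Y)|_{\dalg}$ is skew, pairing with $X$ gives $k\<X,X\>=0$. If $X$ is non-null, then $k=0$. In general, $\ad(Y)|_\dalg$ is nilpotent, while $\ad(A)$ is skew; a nonzero eigenvalue $-k$ of $\phi(Y)+\ad(A)|_\dalg$ on a null vector is what must be excluded. This is the main obstacle. I would try to get $k=0$ from the $\vg$-test: pairing that identity with $Y'=Y$ gives $\<[X+Y,Y],X\>=k\<Y,Y\>$, i.e.\ $\<[X,Y],X\>=k\<Y,Y\>$. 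The left side vanishes by \eqref{eq:skew-derived}, so $k\<Y,Y\>=0$. Combined with $k\<X,X\>=0$ and $\<X+Y,X+Y\>=\<X,X\>+\<Y,Y\>\neq0$, we get $k=0$. Hence \eqref{eq:GO-d} holds.

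\textbf{Step 3.} With $k=0$, the $\vg$-test reads $\<[X+Y,Y'],X\>+\<[A,Y'],Y\>=0$. We have $\<[X,Y'],X\>=0$ by \eqref{eq:skew-derived}, and $\<[A,Y'],Y\>=-\<[A,Y],Y'\>$ by skewness. This gives $\<[A,Y],Y'\>=\<[Y,Y'],X\>$, which is \eqref{eq:GO-v}. The same $A=A(X+Y)$ from the Geodesic Lemma serves in both identities.
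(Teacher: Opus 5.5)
Your argument for $Z\in\vg$ is correct. Testing $T=X+tZ$ against $T'=X$ is equivalent to the paper's test against $T'=Y$, because $\<[T+A,T],T\>=0$. Steps~2--3 also go through once \eqref{eq:skew-derived} is known for all $Z\in\n$. The genuine gap is the case $Z\in\dalg$ in Step~1. With $T=X+Z$ and $T'=X$, both in $\dalg$, the geodesic lemma actually gives
\[
\<[Z,X],X+Z\>+\<[A,X],Z\>=0,\qquad A=A(X+Z),
\]
and not $\<[Z,X],X+Z\>=0$. Skewness of $\ad(A)$ only removes $\<[A,X],X\>$. In the $\vg$ case the cross term $\<[A,X],tZ\>$ vanished because $[A,X]\in\dalg\perp\vg$, but that orthogonality is lost when $Z\in\dalg$, and you have no control over $A$. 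Given the displayed identity, the vanishing of $\<[A,X],Z\>$ is equivalent to the conclusion you want, so dropping it presupposes that conclusion.

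The gap propagates. In Step~2, turning $\<[X,X'],X\>$ into $-\<X',[X,X]\>=0$ is exactly skewness of $\ad(X)|_{\dalg}$ for $X\in\dalg$. So \eqref{eq:GO-d} is not yet established either.

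What is missing is the generation step the paper uses. $\vg$ is a complement of $[\n,\n]$ in a nilpotent Lie algebra, hence it generates $\n$. So every element of $\n$, in particular of $\dalg$, is a linear combination of iterated brackets of elements of $\vg$. By Jacobi, $\phi([Y_1,Y_2])=[\phi(Y_1),\phi(Y_2)]$, and commutators of skew operators are skew. Hence skewness of $\phi$ on $\vg$, which you did prove, gives skewness on all of $\n$. With this replacing your $Z\in\dalg$ argument, the rest of your proof works.

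Incidentally, the ``main obstacle'' in Step~2 is not one. Taking $T'=T$ in the geodesic lemma gives $k\<T,T\>=0$, so $k=0$ directly from the hypothesis that $X+Y$ is non-null.
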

\begin{proof}
Insert $T=X+Y$ and $T'=Y$ into~\eqref{eq:golemma}, initially with $T$ non-null. Orthogonality and skew-adjointness of the isotropy action give $\< [Y,X], X\>=0$. Extend this scalar identity by continuity and polarize in $X$. Thus $\ad(Y)|_{\dalg}$ is skew-adjoint for every $Y\in\vg$. Since $\vg$ generates $\n$ and commutators of skew-adjoint operators are skew-adjoint, the same holds for every $Z\in\n$. Nilpotency follows from that of $\n$, and the homomorphism property is inherited from the adjoint representation. Separating the $\dalg$ and $\vg$ test vectors in~\eqref{eq:golemma} gives~\eqref{eq:GO-v} and~\eqref{eq:GO-d}; in the latter calculation $\< [X,X'], X\>=0$ by~\eqref{eq:skew-derived}.
\end{proof}
Put $\mathfrak k=\phi(\n)$. Engel's theorem and the triangularization used in \cite{NW,CWZ, CNWZ}, with the real-group result of \cite{Mos}, allow $\mathfrak k$ to be conjugated into the nilpotent part of an Iwasawa decomposition of $\so(\dalg)$. Then the definite case is immediate: a skew-adjoint nilpotent operator on a definite space is zero.

Therefore, to prove Theorem~\ref{th:nondeg} we need to consider three cases: 
when $\ind \dalg=1, \ind \vg=2$, when $\ind \dalg=2, \ind \vg=1$, and $\ind \dalg=3, \ind \vg=0$.

We consider these two cases separately in the following two subsections. The proof of Theorem~\ref{th:nondeg} will follow from Propositions~\ref{nondeg1} , ~\ref{nondeg2} and~\ref{nondeg3} below.

\subsection{$\ind \dalg=2, \ind \vg=1$, and $\ind \dalg=3, \ind \vg=0$}
\label{ss:dl}

In this subsection we additionally assume, in the assumptions of Theorem~\ref{th:nondeg}, that  the restrictions of $\ip$ to $\dalg$ is nondegenerate, the restrictions of $\ip$ to $\vg$ is  of Lorentz signature or definite. 

\begin{lemma}\label{lem:auxiliary}
Let $\mathfrak l$ be a finite-dimensional nilpotent Lie algebra generated by an invariant subspace $V$ for a Lie algebra $\mathfrak a\subset\Der(\mathfrak l)$. If $V$ admits an $\mathfrak a$-invariant positive definite inner product, then so does $\mathfrak l$.
\end{lemma}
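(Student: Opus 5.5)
The approach is to pass from $\mathfrak a$ to the compact Lie group of isometries of $V$ and average, using the fact that $\mathfrak l$ is a quotient of the free nilpotent Lie algebra on $V$.

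\textbf{Step 1: compact group.} Let $(\cdot,\cdot)_V$ be the $\mathfrak a$-invariant positive definite inner product on $V$. Since $V$ is $\mathfrak a$-invariant, restriction gives a homomorphism $\rho:\mathfrak a\to\so(V,(\cdot,\cdot)_V)$. Let $K\subset SO(V)$ be the closure of the connected subgroup generated by $\exp\rho(\mathfrak a)$; it is compact.

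\textbf{Step 2: extend the action to $\mathfrak l$.} Write $\mathfrak l=\mathfrak f/\mathfrak j$, where $\mathfrak f$ is the free $c$-step nilpotent Lie algebra on $V$ and $c$ is the nilpotency step of $\mathfrak l$. Here $\mathfrak f$ is finite dimensional and graded, $\mathfrak f=\bigoplus_{k=1}^c \mathfrak f_k$ with $\mathfrak f_1=V$. Every $g\in GL(V)$ extends uniquely to an automorphism $\tilde g$ of $\mathfrak f$, and $g\mapsto\tilde g$ is a continuous (indeed polynomial) representation. Likewise each derivation of $\mathfrak l$ preserving $V$ lifts to a derivation of $\mathfrak f$ extending its restriction to $V$.

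We need $\mathfrak j$ to be $K$-invariant. For $D\in\mathfrak a$, the lift $\tilde D$ satisfies $\pi\circ\tilde D=D\circ\pi$ on $V$, where $\pi:\mathfrak f\to\mathfrak l$ is the projection. Both sides are derivations along $\pi$ and $V$ generates $\mathfrak f$, so they agree on all of $\mathfrak f$. Hence $\tilde D(\mathfrak j)\subset\mathfrak j$. Consequently $\exp(t\tilde D)$ preserves $\mathfrak j$, so the group generated by these elements preserves $\mathfrak j$, and by continuity its closure $\tilde K$ does too. Thus $K$ acts on $\mathfrak l$ by automorphisms, and the induced infinitesimal action of $\rho(D)$ is exactly $D$.

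\textbf{Step 3: average.} Choose any positive definite inner product on $\mathfrak l$ and average it over the compact group $K$ with respect to Haar measure. The result is a $K$-invariant positive definite inner product. Differentiating the invariance gives invariance under $\mathfrak a$, i.e.\ every element of $\mathfrak a$ is skew-adjoint. If one also wants the new inner product to restrict to $(\cdot,\cdot)_V$ on $V$, start with an inner product that extends $(\cdot,\cdot)_V$ and keeps $V$ orthogonal to a $K$-invariant complement; averaging preserves this, since $V$ is $K$-invariant and already carries an invariant inner product.

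\textbf{Main obstacle.} The delicate point is Step 2: $\mathfrak a$ acts on $\mathfrak l$ by derivations but need not itself be a compact Lie algebra, because $\rho$ may have a kernel. The kernel is harmless, however. A derivation that vanishes on the generating set $V$ vanishes on all of $\mathfrak l$, so $\rho$ is injective and $\mathfrak a$ embeds in $\so(V)$. After that, the only things to verify are that the lift commutes with $\pi$ and that $\mathfrak j$ is closed under the closure $K$; both follow from the finite-dimensionality of $\mathfrak f$.
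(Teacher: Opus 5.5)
Your proof is correct, but it takes a different route from the paper's.

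\textbf{The paper's route.} The paper stays at the Lie algebra level. It puts the direct sum of tensor-product inner products on $\mathcal T=\bigoplus_{j=1}^{c}V^{\otimes j}$. Acting through its restriction to $V$, as a derivation on each tensor power, $\mathfrak a$ is automatically skew-adjoint there. By the derivation identity, the nested-bracket map $P:\mathcal T\to\mathfrak l$ is a surjective $\mathfrak a$-equivariant linear map. The paper then transports the inner product from the invariant subspace $(\ker P)^\perp$ to $\mathfrak l$, so the invariant form comes from restricting to an orthogonal complement, not from averaging.

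\textbf{Your route.} You present $\mathfrak l$ as an equivariant quotient $\mathfrak f/\mathfrak j$ of the free $c$-step nilpotent algebra, integrate to the compact closure $K\subset SO(V)$, and average with Haar measure. You isolate exactly the two checks this needs, and both are sound: $\pi\circ\tilde D=D\circ\pi$ by generation, and the stabilizer of $\mathfrak j$ is closed, so it contains $K$ by continuity of $g\mapsto\tilde g$.

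\textbf{Comparison.} The paper's argument is shorter and purely linear-algebraic, with no exponentiation, closures or Haar measure. Yours is the familiar compact-group averaging argument. It also has a small bonus: via a $K$-invariant complement, you can keep $(\cdot,\cdot)_V$ on $V$. The paper's transported form agrees with the given one on $V$ only when $V\cap[\mathfrak l,\mathfrak l]=0$. That holds in its applications, where $V=\mathfrak v$ is complementary to $\dalg$, and the lemma does not require it anyway.

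\textbf{A point of wording.} Your ``main obstacle'' paragraph contradicts itself. As you show, $\rho$ is injective, so $\mathfrak a\cong\rho(\mathfrak a)\subset\so(V)$ is already a compact Lie algebra. The only reason to pass to a closure is that the subgroup generated by $\exp\rho(\mathfrak a)$ need not be closed.
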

\begin{proof}
Let $c$ be the nilpotency step. On
\[
\mathcal T=\bigoplus_{j=1}^{c}V^{\otimes j}
\]
take the direct sum of tensor-product positive definite inner products. Each element of $\mathfrak a$ acts on a tensor power as the sum of its actions on the factors, and hence acts skew-adjointly on $\mathcal T$.

Nested brackets define a surjective linear map
\[
P:\mathcal T\longrightarrow\mathfrak l,
\quad P(v_1\otimes\cdots\otimes v_j)
=[v_1,[v_2,\ldots,[v_{j-1},v_j]\ldots]],
\]
with $P(v)=v$ in degree one. Surjectivity uses generation and nilpotency. The derivation identity makes $P$ equivariant. Thus $\ker P$ and its orthogonal complement in $\mathcal T$ are invariant. Transport the inner product from $(\ker P)^\perp$ through the isomorphism $P|_{(\ker P)^\perp}$ to obtain the required inner product on $\mathfrak l$.
\end{proof}

Now, we prove the following.

\begin{proposition}\label{nondeg1}
     Let $(M = G/H, ds^2)$ be a connected pseudo-Riemannian $G$-geodesic orbit nilmanifold where $G = N \rtimes H$ with $N$ nilpotent. If $\ip|_{\dalg}$ is nondegenerate and $\ip|_{\vg}$ is definite, then $N$ is either abelian or $2$-step nilpotent.
\end{proposition}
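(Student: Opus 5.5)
Throughout, $\vg$ is definite and $\ip$ is nondegenerate on $\dalg$. The aim is to show that $\phi(\n)=\{0\}$. Once that is established, $[\n,\dalg]=0$, so $\dalg$ is central and $\n$ is abelian or two-step nilpotent.

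\emph{Step 1: $\h$ acts skew-adjointly for a definite inner product on $\dalg$.} Set $\mathfrak a=\ad_\g(\h)|_{\n}\subset\Der(\n)$. The isotropy acts skew-adjointly with respect to $\ip$, and $\vg$ is $\mathfrak a$-invariant. Since $\vg$ is definite, $\ip|_\vg$ (or its negative) is an $\mathfrak a$-invariant positive definite inner product on $\vg$. The subspace $\vg$ generates $\n$, because $\n=\dalg\oplus\vg$ with $\n$ nilpotent. Lemma~\ref{lem:auxiliary} therefore gives an $\mathfrak a$-invariant positive definite inner product $(\cdot,\cdot)$ on $\n$. With respect to it, every $\ad(A)$, $A\in\h$, is skew-symmetric, and hence semisimple with imaginary eigenvalues.

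\emph{Step 2: use \eqref{eq:GO-d}.} Fix $X\in\dalg$ and $Y\in\vg$ with $X+Y$ non-null. Since $\vg$ is definite, this holds for all $Y\neq0$ after a small perturbation, and we pass to general $Y$ by continuity where needed. Proposition~\ref{prop:universal} supplies $A\in\h$ with $\ad(Y)X=-\ad(A)X$. Now apply $\ad(Y)$ repeatedly. The cleanest approach is to replace $X$ by elements of $\ker\phi(Y)^k$ and argue by induction on the nilpotency of $\phi(Y)$.

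Concretely, suppose $\phi(Y)X\neq0$. Choose $k$ with $\phi(Y)^{k}X\neq0$ and $\phi(Y)^{k+1}X=0$, and put $X_0=\phi(Y)^{k-1}X$. Then $\phi(Y)X_0=W\neq0$ and $\phi(Y)W=0$. Equation~\eqref{eq:GO-d} applied to $X_0$ gives $\ad(A)X_0=-W$. Since $\ad(A)$ is $(\cdot,\cdot)$-skew, $(\ad(A)X_0,X_0)=0$, so $(W,X_0)=0$. This alone is not a contradiction.

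To strengthen it, apply \eqref{eq:GO-d} to $X_0+tW$ for all $t$ (also to $sX_0$, and use polarization). This yields $A_t$ with $\ad(A_t)(X_0+tW)=-W$. Skewness then gives $(W,X_0+tW)=0$ for all $t$, hence $(W,W)=0$, so $W=0$. This contradiction shows that $\phi(Y)=0$ for all $Y\in\vg$.

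\emph{Step 3: conclude.} Since $\vg$ generates $\n$ and $\phi$ is a homomorphism by Proposition~\ref{prop:universal}, $\phi(\n)=0$. Thus $[\n,[\n,\n]]=0$, which proves the claim.

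The main obstacle is Step 2. The $A$ in \eqref{eq:GO-d} depends on the vector, and non-nullness of $X+Y$ must be preserved for the family $X_0+tW$. This is fine: $\<X_0+tW+Y,X_0+tW+Y\>$ is polynomial in $t$ and in the scale of $Y$, and $\vg$ is definite, so rescaling $Y$ keeps it non-null for generic parameters. The identity $(W,X_0+tW)=0$ is polynomial in $t$, so holding it generically suffices. One must also check that the derivation $\ad(Y)$ maps $\ker\phi(Y)^2$ appropriately, which is immediate.
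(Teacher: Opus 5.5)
Your proof is correct and follows the paper's argument: the same auxiliary $\h$-invariant positive definite metric from Lemma~\ref{lem:auxiliary}, the same identity $(\phi(Y)X,X)=0$ extracted from \eqref{eq:GO-d} for non-null $X+Y$, and the same generation step at the end. The only difference is cosmetic. The paper extends that identity to all of $\dalg$ by density, polarizes, and uses that a nilpotent operator skew for a definite form vanishes. You instead evaluate the identity along the line $X_0+tW$ at the top of a Jordan chain, rescaling $Y$ to keep $X+Y$ non-null, which is just an explicit proof of that same fact.
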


\begin{proof}
The abelian case is immediate, so assume $\dalg\ne0$. Reverse the sign on the definite space $\mathfrak v$ if necessary to obtain a positive definite $\h$-invariant inner product on $\mathfrak v$. This is an auxiliary construction, not a change to the metric in the GO equations. Since $\mathfrak v$ generates $\n$, Lemma~\ref{lem:auxiliary} supplies an $\h$-invariant positive definite inner product $B$ on $\n$.

Fix $Y\in\mathfrak v$. For $X\in\dalg$ with $X+Y$ non-null, equation~\eqref{eq:GO-d} gives an $A\in\h$ satisfying $[Y,X]=-[A,X]$. Pair with $X$ using $B$:
\begin{equation}\label{eq:aux-key}
B([Y,X],X)=-B([A,X],X)=0.
\end{equation}
For fixed $Y$, the set of such $X$ is open dense in $\dalg$, because the nondegenerate quadratic form on $\dalg$ cannot be identically the constant $-\<Y, Y\>$. Therefore the polynomial identity~\eqref{eq:aux-key} holds for every $X\in\dalg$. Polarizing in $X$ makes $\phi(Y)$ skew-adjoint for the positive definite metric $B|_{\dalg}$.

On the other hand $\phi(Y)$ is nilpotent. A real skew-adjoint operator for a positive definite form is diagonalizable over $\mathbb C$ with purely imaginary eigenvalues; if nilpotent it is zero. Hence $[\mathfrak v,\dalg]=0$. The homomorphism property and generation by $\mathfrak v$ imply $\phi(\n)=0$, equivalently $[\n,\dalg]=0$.
\end{proof}

\begin{remark} \label{rem:adhinv}
  In this case, the signature of $\dalg$ is unrestricted.
\end{remark}

For $X\in\dalg$, define $j_X\in\so(\mathfrak v)$ by
\[
\<j_XY, {Y'}\>=\<{[Y,Y']}, {X}\>.
\]
Then equation~\eqref{eq:GO-v} is $AY=j_XY$, where we write $AY=[A,Y]$.
\begin{lemma}\label{lem:scaling}
Fix a non-null $Y\in\mathfrak v$. For every $X\in\dalg$, there exists $B\in\h$ such that
\begin{equation}\label{eq:fixed-vector}
BY=0,\qquad BX=-[Y,X].
\end{equation}
\end{lemma}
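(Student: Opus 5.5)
The plan is to use the Geodesic Lemma twice, at $T=X+tY$ (for a large scalar $t$) and at $T=tY$ alone, and subtract the resulting isotropy elements. We use Proposition~\ref{prop:universal} in the form~\eqref{eq:GO-v}--\eqref{eq:GO-d}, and the linearity of the GO equations in the pair $(X,Y)$ once $A$ is allowed to vary.

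First, for $t\in\br$ with $t\ne0$, the vector $X+tY$ is non-null for all but finitely many $t$. Indeed $\<X+tY,X+tY\>=\<X,X\>+t^2\<Y,Y\>$ is a nonconstant polynomial in $t$, since $\<Y,Y\>\ne0$. For such $t$, Proposition~\ref{prop:universal} applied to the pair $(X,tY)$ yields $A_t\in\h$ with
\[
A_tY=\tfrac1t j_XY \quad\text{(from \eqref{eq:GO-v})},\qquad A_tX=-t[Y,X] \quad\text{(from \eqref{eq:GO-d})}.
\]
Here I would check the scaling carefully: \eqref{eq:GO-v} for $(X,tY)$ reads $\<[A_t,tY],Y'\>=\<[tY,Y'],X\>$, so $A_tY=j_XY$. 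So it is in fact $A_tY=j_XY$, independent of $t$.

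Second, apply the same proposition to the pair $(0,Y)$, which is allowed because $Y$ is non-null. This gives $A_0\in\h$ with $A_0Y=j_0Y=0$; this step is not even needed. The real issue is removing $j_XY$. For that, use the pair $(X,sY)$ at two different admissible values $s=t_1,t_2$ with $t_1\ne t_2$. Then $A_{t_1}Y=A_{t_2}Y=j_XY$, while $A_{t_i}X=-t_i[Y,X]$. Put $B=(A_{t_1}-A_{t_2})/(t_1-t_2)$. Then $BY=0$ and $BX=-[Y,X]$, which is exactly~\eqref{eq:fixed-vector}. Since all but finitely many $t$ are admissible, two distinct admissible values exist.

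The only delicate point is the bookkeeping in the scaling. The $\vg$-component equation~\eqref{eq:GO-v} must be homogeneous of the same degree in $Y$ on both sides, so that $A_tY$ does not depend on $t$. The $\dalg$-component~\eqref{eq:GO-d} must carry the factor $t$, so that the difference quotient isolates $[Y,X]$. I would verify this directly from~\eqref{eq:golemma} with $T=X+tY$, testing separately against $T'\in\vg$ and $T'\in\dalg$, using the $\h$-invariance of $\dalg$ and $\vg$. I expect this check, rather than any conceptual step, to be the main thing to get right.
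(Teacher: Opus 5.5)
Your argument is correct and is essentially the paper's proof: apply \eqref{eq:GO-v}--\eqref{eq:GO-d} to the pairs $(X,t_iY)$ for two distinct nonzero $t_i$ with $X+t_iY$ non-null, so that $A_iY=j_XY$ does not depend on $t_i$ while $A_iX=-t_i[Y,X]$, and then take the difference quotient. In a write-up, drop the false start $A_tY=\tfrac1t j_XY$ and the unused pair $(0,Y)$, and state explicitly that $t_1,t_2\neq0$, since this is what lets you cancel $t$ in \eqref{eq:GO-v}.
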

\begin{proof}
Choose distinct nonzero $t_1,t_2\in\mathbb R$ for which $X+t_iY$ is non-null. Such choices exist because $\<X, X\>+t^2\<Y, Y\>$ is a nonzero polynomial. Equations~\eqref{eq:GO-v}--\eqref{eq:GO-d} give $A_i\in\h$ with
\[
A_iY=j_XY,\qquad A_iX=-t_i[Y,X].
\]
Set $B=(A_2-A_1)/(t_2-t_1)$. No continuity of the choice of $A_i$, and no linear geodesic graph, is required.
\end{proof}

\begin{proposition}\label{nondeg2}
     Let $(M = G/H, ds^2)$ be a connected pseudo-Riemannian $G$-geodesic orbit nilmanifold where $G = N \rtimes H$ with $N$ nilpotent. If $\ip|_{\dalg}$ is nondegenerate and $\ip|_{\vg}$ is Lorentzian, then $N$ is either abelian or $2$-step nilpotent.
\end{proposition}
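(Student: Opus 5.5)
Under the hypotheses of Proposition~\ref{nondeg2} the total index is three and $\ind\vg=1$, so $\ind\dalg=2$. The aim is to show $\phi(\vg)=0$. Since $\vg$ generates $\n$ and $\phi$ is a homomorphism (Proposition~\ref{prop:universal}), this gives $[\n,\dalg]=0$, which is the abelian or two-step conclusion.

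\emph{Step 1: timelike $Y$.} Fix a non-null $Y\in\vg$ with $\<Y,Y\><0$. Let $\h_Y=\{B\in\h: BY=0\}$ be its stabilizer. Because $\ip|_\vg$ is Lorentzian and $Y$ is timelike, $\h_Y$ acts skew-adjointly on the positive definite space $Y^\perp\cap\vg$, and it fixes $Y$. Hence $\vg$ carries an $\h_Y$-invariant positive definite inner product: take the orthogonal sum of $\<Y,Y\>^{-1}$-rescaled $\mathbb R Y$ and $Y^\perp\cap\vg$. Apply Lemma~\ref{lem:auxiliary} with $\mathfrak a=\ad(\h_Y)$ and generating subspace $\vg$. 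This gives an $\h_Y$-invariant positive definite form $B_Y$ on $\n$, and it preserves $\dalg$ if we take $B_Y$-orthogonal projections; we may assume $\dalg$ is $B_Y$-orthogonal to some invariant complement. By Lemma~\ref{lem:scaling}, for every $X\in\dalg$ there is $B\in\h_Y$ with $[Y,X]=-BX$. Therefore $B_Y([Y,X],X)=-B_Y(BX,X)=0$ for all $X$. Polarizing shows $\phi(Y)$ is $B_Y$-skew on $\dalg$. Since $\phi(Y)$ is also nilpotent, it vanishes. So $\phi(Y)=0$ for every timelike $Y$.

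\emph{Step 2: extend to all of $\vg$.} The timelike vectors form a nonempty open subset of $\vg$. Since $\phi$ is linear, it vanishes on their span, which is all of $\vg$. Hence $[\vg,\dalg]=0$, and the conclusion follows as in the proof of Proposition~\ref{nondeg1}.

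\emph{Main obstacle.} The delicate point is Step 1: the invariant positive definite form must be invariant under the particular elements $B$ produced by Lemma~\ref{lem:scaling}. This is why we restrict to the stabilizer $\h_Y$ rather than all of $\h$. It is also why timelike $Y$ is essential. For spacelike $Y$, the orthogonal complement $Y^\perp\cap\vg$ is Lorentzian, so it does not carry a positive definite form invariant under the stabilizer, and the argument breaks down. The openness of the timelike cone is what rescues the argument. One must also check the $B_Y$-compatibility with $\dalg$. Restricting $B_Y$ to $\dalg$ is enough for this, because $\dalg$ is $\h$-invariant and the pairing $B_Y(BX,X)$ involves only $\dalg$.
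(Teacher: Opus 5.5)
Your proposal is correct and follows essentially the same route as the paper. You fix a timelike $Y$, build an $\h_Y$-invariant positive definite form on $\vg$ by flipping the sign on $\mathbb RY$, extend it to $B_Y$ on $\n$ via Lemma~\ref{lem:auxiliary}, and use Lemma~\ref{lem:scaling} to make the nilpotent operator $\phi(Y)$ skew for $B_Y|_{\dalg}$, hence zero; openness of the timelike cone and generation by $\vg$ then finish the argument. Your aside about $B_Y$-orthogonal complements of $\dalg$ is unnecessary, since only the restriction of $B_Y$ to the $\h$-invariant subspace $\dalg$ is used.
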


\begin{proof}
Fix a timelike $Y\in\mathfrak v$ and let $\h_Y=\{B\in\h:BY=0\}$. The orthogonal decomposition
\[
\mathfrak v=\mathbb RY\mathbin{\perp}(Y^\perp\cap\mathfrak v)
\]
is $\h_Y$-invariant. Change the sign on the negative line and retain the positive definite form on its complement. This gives an auxiliary $\h_Y$-invariant positive definite inner product on $\mathfrak v$.

Since $\mathfrak v$ generates $\n$, Lemma~\ref{lem:auxiliary} gives an $\h_Y$-invariant positive definite inner product $B_Y$ on $\n$. It depends on $Y$ and $\h_Y$, but not on $X$. For each $X\in\dalg$, Lemma~\ref{lem:scaling} supplies $B\in\h_Y$ with $BX=-[Y,X]$. Thus
\[
B_Y([Y,X],X)=-B_Y(BX,X)=0.
\]
Polarization makes $\phi(Y)$ skew-adjoint for $B_Y|_{\dalg}$. Since it is nilpotent, it is zero. This holds for every timelike $Y$. The timelike cone is nonempty and open, so linearity of $\phi$ gives $\phi(\mathfrak v)=0$. Generation by $\mathfrak v$ then gives $[\n,\dalg]=0$.
\end{proof}

\begin{remark}
  In this case, the signature of $\dalg$ is also unrestricted.
\end{remark}

By Proposition \ref{nondeg1} and Proposition \ref{nondeg2}, we have
\begin{corollary}\label{cor:complement-splits}
In total index three, the nondegenerate splits $(s,t)=(2,1)$ and $(3,0)$ are at most two-step.
\end{corollary}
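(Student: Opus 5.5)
The corollary is immediate from the two preceding propositions once the index bookkeeping is done. By Proposition~\ref{prop:universal} and the standing assumption that $\dalg$ is nondegenerate, we have the orthogonal splitting $\n=\dalg\perp\vg$. Hence the total index three equals $s+t$ with $s=\ind\dalg$ and $t=\ind\vg$. For the split $(s,t)=(3,0)$ the restriction $\ip|_{\vg}$ is positive definite. For $(s,t)=(2,1)$ it is of Lorentz signature, since $\vg$ carries exactly one negative square.

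In the first case I apply Proposition~\ref{nondeg1}. Its hypotheses are that $\ip|_{\dalg}$ is nondegenerate and $\ip|_{\vg}$ is definite, and both hold. It yields $[\n,\dalg]=0$, so $\n$ is abelian or two-step nilpotent. In the second case I apply Proposition~\ref{nondeg2} in the same way: $\ip|_{\vg}$ is Lorentzian, and the conclusion is again $[\n,\dalg]=0$. Neither proposition places any restriction on the signature of $\dalg$, as the remarks following them note. So the values $s=3$ and $s=2$ cause no difficulty.

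One degenerate subcase needs a check: $\vg$ could be zero or lack a vector of the required type. If $\vg=0$, then $\n=\dalg=[\n,\n]$, which for nilpotent $\n$ forces $\n=0$, so $\n$ is trivially abelian. In the Lorentzian case a timelike vector exists by definition of the signature, so the open timelike cone used in Proposition~\ref{nondeg2} is nonempty. I expect no real obstacle here; the only point to verify is this matching of the signature split to the hypotheses of the two propositions.
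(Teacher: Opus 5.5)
Your proposal is correct and is essentially the paper's argument: the corollary follows by matching $t=0$ to the definite case of Proposition~\ref{nondeg1} and $t=1$ to the Lorentzian case of Proposition~\ref{nondeg2}, neither of which restricts the signature of $\dalg$. Two minor points: the splitting $\n=\dalg\perp\vg$ comes directly from the nondegeneracy of $\dalg$ rather than from Proposition~\ref{prop:universal}, and your $\vg=0$ subcase is vacuous, since it forces $\n=0$ and hence $s=0$.
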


\subsection{$\ind \dalg=1$ and  $\ind \vg=2$}\label{sec:lorentz-derived}

Assume $\dalg$ is nondegenerate of Lorentz signature, with one negative direction. If its metric is definite after sign reversal, Proposition~\ref{prop:universal} already applies. All orthogonal complements of subspaces of $\mathfrak v$ in this section are taken inside $\mathfrak v$, unless explicitly indicated otherwise.

Choose a Witt basis $e_1,e_2,\ldots,e_{m-1},e_m$ of $\dalg$ with
\[
\<{e_1}, {e_m}\>=1,\qquad
\<{e_i}, {e_j}\>=\delta_{ij}\quad(2\leq i,j\leq m-1).
\]
Write $W_0=\Span(e_2,\ldots,e_{m-1})$. The Iwasawa nilradical in Lorentz signature is abelian. Hence there is a linear map $\Psi:\n\to W_0$ such that
\begin{equation}\label{eq:L-normal}
[T,e_1]=\Psi T,\quad [T,z]=-\<{\Psi T}, {z}\>e_m\ (z\in W_0),\quad [T,e_m]=0.
\end{equation}
Since $\phi(\dalg)=[\kk,\kk]=0$, the derived algebra is abelian and $\Psi|_{\dalg}=0$.
For $Y_1,Y_2\in\mathfrak v$, write
\[
[Y_1,Y_2]=\sum_{i=1}^m\omega_i(Y_1,Y_2)e_i.
\]
The component $e_1$ cannot be supplied by $[\mathfrak v,\dalg]$, so $\omega:=\omega_1\ne0$. The $W_0$ component of Jacobi gives
\begin{equation}\label{eq:Cartan-vector}
\omega(Y_1,Y_2)\Psi Y_3+\omega(Y_2,Y_3)\Psi Y_1+\omega(Y_3,Y_1)\Psi Y_2=0.
\end{equation}

\begin{lemma}\label{lem:Cartan}
Let $0\ne\omega\in\Lambda^2 V^*$ and let $\Psi:V\to W$ satisfy~\eqref{eq:Cartan-vector}. Then $\rank\Psi\leq2$. If the rank is two, there are independent $\alpha,\beta\in V^*$ and independent $u,v\in W$ such that
\[
\Psi=\alpha\otimes u+\beta\otimes v,\qquad \omega=c\alpha\wedge\beta\quad(c\ne0).
\]
If the rank is one, then $\Psi=\alpha\otimes u$ and $\omega=\alpha\wedge\gamma$ for independent $\alpha,\gamma$ and nonzero $u$.
\end{lemma}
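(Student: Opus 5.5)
Fix a basis-free viewpoint. For each linear functional $\lambda\in W^*$, compose \eqref{eq:Cartan-vector} with $\lambda$ to get a scalar identity
\[
\omega(Y_1,Y_2)\,\psi(Y_3)+\omega(Y_2,Y_3)\,\psi(Y_1)+\omega(Y_3,Y_1)\,\psi(Y_2)=0,\qquad \psi=\lambda\circ\Psi\in V^*,
\]
which says exactly that the $3$-form $\omega\wedge\psi$ vanishes. My plan is to use the standard divisibility fact for exterior algebra: for a nonzero $1$-form $\psi$, $\omega\wedge\psi=0$ holds if and only if $\omega=\psi\wedge\gamma$ for some $\gamma\in V^*$. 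I would prove this by completing $\psi$ to a basis $\psi=\theta_1,\theta_2,\dots$ of $V^*$ and expanding $\omega$. Then every component $\psi$ in the image of $\Psi^*:W^*\to V^*$ divides $\omega$.

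The key step is to bound the dimension of $U:=\Psi^*(W^*)$, which equals $\rank\Psi$. Every nonzero element of $U$ divides the nonzero $2$-form $\omega$. First, this forces $\omega$ to be decomposable. Suppose $\psi_1,\psi_2,\psi_3\in U$ were independent. From $\omega=\psi_1\wedge\gamma_1$ we get that $\omega$ is decomposable, $\omega=\alpha\wedge\beta$. A nonzero $1$-form divides a nonzero decomposable $\alpha\wedge\beta$ exactly when it lies in $\Span(\alpha,\beta)$; indeed $\psi\wedge\alpha\wedge\beta=0$ iff $\psi\in\Span(\alpha,\beta)$. Hence $U\subseteq\Span(\alpha,\beta)$, and so $\rank\Psi=\dim U\le 2$. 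This containment is the only real content, and the main point to check carefully is the equivalence $\psi\wedge\alpha\wedge\beta=0\iff\psi\in\Span(\alpha,\beta)$, which again follows from completing to a basis.

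For the normal forms, the argument splits by rank.

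\emph{Rank two.} Here $U=\Span(\alpha,\beta)$ equals the $2$-dimensional space determined by $\omega$. Choose $\alpha,\beta$ as a basis of $U$; then $\omega=c\,\alpha\wedge\beta$ with $c\ne0$, because $\omega$ is a nonzero element of $\Lambda^2$ of that plane. Writing $\Psi=\alpha\otimes u+\beta\otimes v$ is just the rank-two factorization $\Psi=\sum(\text{basis of }U)\otimes(\text{vectors in }W)$. The vectors $u,v$ are independent because the rank is two.

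\emph{Rank one.} Write $\Psi=\alpha\otimes u$ with $u\ne0$ and $\alpha\ne0$. Then $\alpha\in U$ divides $\omega$, so $\omega=\alpha\wedge\gamma$. The form $\gamma$ is independent of $\alpha$ since $\omega\ne0$.

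I would also note the rank-zero case as trivial, since it is not claimed. No appeal to earlier results of the paper is needed beyond the identity \eqref{eq:Cartan-vector} itself.
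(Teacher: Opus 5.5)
Your proposal is correct and follows the paper's route. You compose~\eqref{eq:Cartan-vector} with functionals on $W$ to get $\omega\wedge(\ell\circ\Psi)=0$, deduce that each nonzero such one-form divides $\omega$, and then observe that every one-form dividing the nonzero, hence decomposable, two-form $\omega=\alpha\wedge\beta$ lies in $\Span(\alpha,\beta)$; this gives $\rank\Psi=\dim\Psi^*(W^*)\le2$ and the stated normal forms, with your proof simply spelling out the steps the paper's three-line argument leaves implicit.
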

\begin{proof}
For every $\ell\in W^*$, the one-form $\lambda=\ell\circ\Psi$ satisfies $\omega\wedge\lambda=0$. For a nonzero one-form $\lambda$, this identity makes $\omega$ divisible by $\lambda$. A nonzero two-form cannot be divisible by three independent one-forms. This proves the rank bound. Two independent such forms force $\omega$ to be a nonzero multiple of their wedge; one gives the usual Cartan factorization.
\end{proof}
In the non-two-step case $\Psi\ne0$, set
\begin{equation}\label{eq:CQ}
C=\ker(\Psi|_{\mathfrak v}),\qquad Q=\ker\omega.
\end{equation}
Then $Q\subset C$, $\codim_{\mathfrak v}Q=2$, and $\codim_{\mathfrak v}C\in\{1,2\}$. If $\rank\Psi=2$, then $C=Q$. These conclusions are asserted only on the branch $\Psi\ne0$.

The centralizer of $\dalg$ in $\n$ is $\dalg\oplus C$, so $C$ is $\h$-invariant. Also $\mathbb Re_m$ is the radical of $[\mathfrak v,\dalg]$ by~\eqref{eq:L-normal} and is invariant. Thus
\[
Q=\{Y\in\mathfrak v:\<{[Y,\mathfrak v]}, {e_m}\>=0\}
\]
and $Q^\perp$ are invariant.

\begin{theorem}
Suppose $\dalg$ is nondegenerate Lorentzian and $\ind\mathfrak v\leq2$. Either $\n$ is at most two-step nilpotent, or $\ind\mathfrak v=2$ and
\[
L=Q^\perp\subset Q\subset\mathfrak v
\]
is an $\h$-invariant maximal totally isotropic two-plane, with $Q/L$ positive definite.
\end{theorem}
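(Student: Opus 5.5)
Assume $\Psi\ne0$, since otherwise $[\n,\dalg]=0$ and $\n$ is at most two-step. We then have the $\h$-invariant subspaces $Q\subset C\subset\vg$ with $\codim_{\vg}Q=2$, together with the invariant subspace $Q^\perp$ of dimension two. The plan is to show that $Q^\perp$ is totally isotropic, which forces $\ind\vg=2$ and $L=Q^\perp\subset Q$. I would do this by ruling out, one by one, the other possible signatures of the invariant plane $Q^\perp$ (equivalently of $Q$).

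\textbf{Step 1: $Q$ is degenerate.} Suppose $Q$ is nondegenerate. Then $\vg=Q\perp Q^\perp$ is an invariant orthogonal splitting, and $\omega$ vanishes on $Q$ in the sense that $Q=\ker\omega$, so $\omega$ is a nonzero multiple of the area form of $Q^\perp$ pulled back through the projection. By Lemma~\ref{lem:Cartan}, $\Psi$ factors through forms built from $\alpha,\beta$ (or $\alpha,\gamma$) that vanish on $Q$. Hence some $Y\in Q^\perp\setminus C$ has $\Psi Y\ne0$.

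In this situation I would copy the method of Proposition~\ref{nondeg2}. The sum of the index of $Q$ and the index of $Q^\perp$ is at most $2$, so the cases are:
\begin{itemize}
\item $Q^\perp$ definite;
\item $Q^\perp$ Lorentzian;
\item $Q^\perp$ negative definite with $Q$ positive definite.
\end{itemize}

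In the first two cases, choose $Y\in Q^\perp$ non-null with $\Psi Y\ne0$ (possible because $\Psi|_{Q^\perp}\ne0$ and the non-null vectors are dense). Let $\h_Y$ be its stabilizer. The decomposition $\vg=\mathbb RY\perp(Y^\perp\cap Q^\perp)\perp Q$ is $\h_Y$-invariant, and each summand has index at most one or is definite. Flipping signs summand-wise gives an $\h_Y$-invariant positive definite form on $\vg$. Lemma~\ref{lem:auxiliary} together with Lemma~\ref{lem:scaling} then makes $\phi(Y)$ skew-adjoint and nilpotent, so $\phi(Y)=0$, which contradicts $\Psi Y\ne0$.

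The third case is the same after sign flips: $Q^\perp$ is a definite plane, and the same argument applies.

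\textbf{Step 2: $Q^\perp$ is totally isotropic.} If $Q$ is degenerate but $Q^\perp$ is not totally isotropic, then $Q\cap Q^\perp$ is an invariant null line $\ell$. Then $\ell^\perp\supset Q$ with codimension one, and the quotient structure again yields invariant pieces. I would consider a non-null $Y\in Q^\perp\setminus\ell$ with $\Psi Y\ne0$. This requires checking that $\Psi$ does not vanish on $Q^\perp$: since $\ker\Psi=C\supset Q$ and $\Psi\ne0$, $\Psi$ vanishes on all of $Q^\perp$ only if $C\supset Q+Q^\perp$; in that case use $Y\notin C$ from a complement instead.

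For this $Y$, $\h_Y$ fixes $Y$ and preserves $\ell$. I would build a $\h_Y$-invariant flag on $\vg$ whose graded pieces are definite, and run the skew-adjointness argument on the associated graded object. This step is the main obstacle: a merely filtered invariant structure does not directly yield an invariant positive form. What I would try first is to use $Y$ and $\ell$ to split off the hyperbolic plane $\Span(Y,\ell)$, and to observe that $\h_Y$ acts on $\ell$ by a character that must be trivial. Triviality should follow because $\h_Y$ is compact-type on the complement and must fix $Y$, so it fixes $\ell$ pointwise. With $\ell$ fixed pointwise, the plane $\Span(Y,\ell)$ becomes pointwise fixed, which gives an invariant nondegenerate decomposition, and then the argument of Step 1 applies.

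\textbf{Step 3: conclusion.} Once $L=Q^\perp$ is a totally isotropic two-plane, we get $L\subset L^\perp=Q$, so $\ind\vg\ge2$; combined with the hypothesis $\ind\vg\le2$ this gives $\ind\vg=2$. Hence $L$ is maximal totally isotropic, and $Q/L$ carries the induced nondegenerate form of index $\ind\vg-2=0$, which is therefore positive definite. Invariance of $L$ and $Q$ was established before the theorem.
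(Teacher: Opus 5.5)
Your Step 3 is correct, but Steps 1 and 2 leave a genuine gap: they do not prove that $Q^\perp$ is totally isotropic.

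\textbf{Step 1.} The sign-flip device gives an $\h_Y$-invariant positive definite form on $\vg=\mathbb RY\perp(Y^\perp\cap Q^\perp)\perp Q$ only when $Q$ itself is definite. A Lorentzian summand cannot be made definite by reversing its sign. Moreover, $\h_Y|_Q$ is only pseudo-orthogonal, so it may contain boosts and preserve no positive form at all. Your case list hides this. With $\ind\vg=2$ and $Q^\perp$ positive definite, $\ind Q=2$. With $\ind\vg=2$ and $Q^\perp$ Lorentzian, or with $\ind\vg=1$ and $Q^\perp$ positive definite, $Q$ is Lorentzian. In these cases Lemma~\ref{lem:auxiliary} has nothing to start from, so $\phi(Y)=0$ does not follow. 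The paper does not prove this step by hand. It disposes of a nondegenerate centralizer $C$ by citing \cite[Theorem 1(b)]{NW}. It then treats the remaining possibility, $Q$ nondegenerate with $C$ degenerate (rank one, $C^\perp$ a null line in a Lorentzian $Q^\perp$), with the same radical-vector argument as the degenerate case.

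\textbf{Step 2.} This is not yet an argument, and the proposed repair rests on three false or unproved claims:
\begin{itemize}
\item $\Span(Y,\ell)$ is not a hyperbolic plane. Since $\ell=\rad Q^\perp$ is orthogonal to $Y$, the plane is degenerate and cannot be split off orthogonally even if it is fixed pointwise.
\item Nothing forces $\h_Y$ to act trivially on $\ell$. An invariant null line can carry a nonzero character.
\item The claim ``compact type on the complement'' is unproved. $\ell^\perp/\ell$ has index $\ind\vg-1$, which can be one.
\end{itemize}

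The missing idea is the paper's direct use of the GO equations at the radical vector:
\begin{itemize}
\item For $0\ne e\in\rad C$, \eqref{eq:GO-v} with $Y=e$ gives $[e,e^{\perp_\n}]=0$.
\item Choose a null $f$ with $\langle e,f\rangle=1$. Skewness gives $[A,f]+af\in e^{\perp_\n}$, hence $[A,[f,e]]=0$ for all $A\in\h$.
\item Then \eqref{eq:GO-d} makes $[f,e]$ central. So its $e_1$-component vanishes by \eqref{eq:L-normal}, and $e\in Q$.
\item Write $Q^\perp=\Span(e,Y_0)$ and $X_0=[f,Y_0]$ with $\kappa_1\ne0$, and normalize $\Psi f=e_2$.
\item Apply \eqref{eq:GO-d} to $X_0$ and $f+\lambda e$, using $[Y_0,Q]=0$. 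The $e_1$-component gives $b=0$, and the $e_2$-component then gives $\kappa_1=0$, a contradiction.
\end{itemize}

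No invariant positive form is needed anywhere in that argument. Your approach has no substitute for this algebraic mechanism.
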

\begin{proof}
Suppose $\Psi\ne0$. If $C$ were nondegenerate, the nondegenerate-centralizer theorem \cite[Theorem 1(b)]{NW} would force step at most two. Hence $C$ is degenerate.

Assume $Q^\perp$ is not totally isotropic. If $\rank\Psi=1$, the degenerate hyperplane $C$ has a one-dimensional radical. If $\rank\Psi=2$, then $C=Q$ and the degenerate, non-totally-isotropic plane $C^\perp$ again has a one-dimensional radical. In both cases choose $0\ne e\in\rad C$. The line $\mathbb Re$ is $\h$-invariant.

Use~\eqref{eq:GO-v} with $Y=e$ and $Y'\in e^\perp\cap\mathfrak v$, and vary $X\in\dalg$. Since $[A,e]\in\mathbb Re$, it follows that $[e,e^\perp\cap\mathfrak v]=0$. Also $e\in C$ centralizes $\dalg$. Consequently
\begin{equation}\label{eq:e-central}
[e,e^{\perp_{\n}}]=0.
\end{equation}
Choose a null $f\in\mathfrak v$ with $\<e, f\>=1$. For every $A\in\h$, write $[A,e]=a e$. Skew-adjointness gives $[A,f]+af\in e^{\perp_{\n}}$, so the derivation identity and~\eqref{eq:e-central} show $[A,[f,e]]=0$. Applying~\eqref{eq:GO-d} to $X=[f,e]$ shows that $[f,e]$ centralizes $\mathfrak v$, and hence lies in $\z(\n)\cap\dalg$. Its $e_1$ component is zero by~\eqref{eq:L-normal}; therefore $e\in Q$.

Thus $Q^\perp=\Span(e,Y_0)$ with $\< e, {Y_0}\>=0$ and $\<{Y_0}, {Y_0}\>\ne0$. We have $\mathfrak v=\mathbb Rf\oplus\mathbb RY_0\oplus Q$ and
\[
X_0=[f,Y_0]=\sum_i\kappa_i e_i,\qquad\kappa_1\ne0.
\]
Since $f\notin C$, normalize $\Psi f=e_2$. In particular $[f,e_1]=e_2$ and $[f,e_2]=-e_m$. The central vector $z=[f,e]$ has zero $e_1$ and $e_2$ components.

One further consequence of~\eqref{eq:GO-v}, with $Y=Y_0$ and $Y'\in Q$, is
\begin{equation}\label{eq:Y0Q}
[Y_0,Q]=0,
\end{equation}
because $Q^\perp$ is invariant. For any $A\in\h$, skew-adjointness also gives $[A,Y_0]=\mu e$. Choose $\lambda$ so that $X_0+f+\lambda e$ is non-null, and choose $A$ in~\eqref{eq:GO-d} for $X=X_0$, $Y=f+\lambda e$. Write $[A,f]=b f+cY_0+q$ with $q\in Q$. Then~\eqref{eq:Y0Q} and the derivation identity give
\[
0=[A+f+\lambda e,X_0]
=bX_0+\mu z+\kappa_1e_2-\kappa_2e_m.
\]
Its $e_1$ component implies $b=0$, and its $e_2$ component then implies $\kappa_1=0$, a contradiction. Hence $Q^\perp$ is totally isotropic. Its dimension is two, so $\ind\mathfrak v=2$; removing its two hyperbolic pairs leaves the positive definite quotient $Q/Q^\perp$.
\end{proof}

Then we consider the last remaining case in the proof of Theorem~\ref{th:nondeg}. We prove the following.

\begin{proposition}\label{nondeg3}
     Let $(M = G/H, ds^2)$ be a connected pseudo-Riemannian $G$-geodesic orbit nilmanifold where $G = N \rtimes H$ with $N$ nilpotent. If $\ind \dalg=1$ and  $\ind \vg=2$, then $N$ is is abelian, two-step or four-step nilpotent.
\end{proposition}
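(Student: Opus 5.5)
The plan is to reduce to the branch $\Psi\ne0$ and then compute the lower central series using the normal form \eqref{eq:L-normal} and the invariant flag from the preceding theorem.

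\emph{Step 1: the case $\Psi=0$.} By \eqref{eq:L-normal}, $\Psi=0$ gives $[\n,\dalg]=0$. Then $\n$ is abelian or two-step, and there is nothing more to prove.

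\emph{Step 2: setup when $\Psi\ne0$.} Now assume $\Psi\ne0$. By the preceding theorem, $\ind\vg=2$. Moreover $L=Q^\perp\subset Q$ is an $\h$-invariant totally isotropic two-plane, and $Q/L$ is positive definite.

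\emph{Step 3: filtration by weights.} From \eqref{eq:L-normal} we have
\[
[\n,e_1]\subset W_0,\qquad [\n,W_0]\subset\mathbb Re_m,\qquad [\n,e_m]=0.
\]
Hence $\ad(\n)$ acts on $\dalg$ with nilpotency at most three, along the flag
\[
\dalg\supset W_0\oplus\mathbb Re_m\supset\mathbb Re_m\supset0.
\]
The lower central series satisfies $\n^2=\dalg$ and $\n^{k+1}=[\n,\n^k]\subset\phi(\n)^{k-1}\dalg$. This gives
\[
\n^3\subset W_0+\mathbb Re_m,\qquad \n^4\subset\mathbb Re_m,\qquad \n^5=0.
\]
More precisely, $\n^3=[\vg,\dalg]+[\dalg,\dalg]=[\vg,\dalg]$, because $\dalg$ is abelian. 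The image of this is $\Psi(\vg)\oplus\mathbb Re_m$ modulo $e_1$-free terms. So the step is at most four automatically.

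\emph{Step 4: excluding step three.} Step three would mean $\n^4=0$ while $\n^3\ne0$. Since $\Psi\ne0$, $\n^3$ contains $\Psi Y$ for some $Y\in\vg$, say $\Psi f\ne0$, because some element of $\dalg$ has nonzero $e_1$-component ($\omega\ne0$). Then
\[
[f,\Psi f]=-\<\Psi f,\Psi f\>e_m\ne0,
\]
since $W_0$ is positive definite. Thus $\n^4\ni e_m\neq0$, and the step is exactly four. More carefully, one checks that $[f,[f,X]]=-|\Psi f|^2\kappa_1 e_m$, where $X=[f,Y_0]\in\dalg$ has nonzero $e_1$-component $\kappa_1$. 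This is a nonzero fourfold bracket.

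\emph{Conclusion.} When $\Psi\ne0$, $\n$ is exactly four-step. This rules out step three in every case, and the possible steps are abelian, two, or four.

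\emph{Main obstacle.} The key point is that positivity of $W_0$ converts $\Psi\ne0$ into a nonzero fourth-term bracket. One must verify that some $X\in\dalg$ with nonzero $e_1$-component has the form $[Y',Y'']$ with $Y',Y''\in\vg$. This holds because $[\vg,\dalg]$ has no $e_1$-component, so $\omega\ne0$. The remaining check is that $\Psi$ is evaluated on a vector $f\notin C$, which exists by the definition of $C$.
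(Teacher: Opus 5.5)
Your proof is correct and essentially the same as the paper's: \eqref{eq:L-normal} makes any composition of three operators $\phi(Y)$ vanish, so $C^5\n=0$; and when $\Psi f\ne0$, positivity of $W_0$ gives $[f,[f,e_1]]=[f,\Psi f]=-\|\Psi f\|^2e_m\ne0$ in $C^4\n$, so the step is exactly four. Two small corrections: the appeal to the preceding theorem is unnecessary, and your ``more careful'' version should not use $Y_0$ and $\kappa_1$, which are defined only in the branch of that theorem's proof that ends in a contradiction. Drop it and use $\Psi f=[f,e_1]\in\n^3$ directly, which is valid since $e_1\in\dalg$.
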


\begin{proof}
For this case, the definite subcase for $\dalg$ is immediate. Otherwise use~\eqref{eq:L-normal}. The derived algebra is abelian, and the product of any three operators $\phi(Y)$ is zero. Hence $C^5\n=0$, where $C^1\n=\n$ and $C^{j+1}\n=[\n,C^j\n]$.

If $\Psi\ne0$, choose $Y\in\mathfrak v$ with $\Psi Y\ne0$. Since $\dalg$ is spanned by brackets, some $X=[T_1,T_2]$ has nonzero $e_1$ coefficient $c$. Applying~\eqref{eq:L-normal} twice gives
\[
[Y,[Y,X]]=-c\|\Psi Y\|^2e_m\ne0.
\]
This lies in $C^4\n$, so the step is exactly four. If $\Psi=0$, the algebra is at most two-step. Thus step three is impossible.
\end{proof}

\begin{corollary}\label{cor:metabelian}
Every nondegenerate-derived GO metric nilpotent Lie algebra of total index three is metabelian: $[\dalg,\dalg]=0$.
\end{corollary}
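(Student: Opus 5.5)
The claim is that $[\dalg,\dalg]=0$ in every nondegenerate-derived GO metric nilpotent Lie algebra of total index three. I will treat the three admissible index splits $(s,t)=(\ind\dalg,\ind\vg)\in\{(3,0),(2,1),(1,2)\}$ separately, together with the definite case for $\dalg$. The only tool needed beyond the earlier results is the homomorphism $\phi:\n\to\so(\dalg)$ of Proposition~\ref{prop:universal}. The point is that $[\dalg,\dalg]$ is exactly $\phi(\dalg)$ applied to $\dalg$. So it suffices to show $\phi(\dalg)=0$, that is, $\dalg$ centralizes itself.

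\textbf{Splits $(2,1)$ and $(3,0)$.} Here Propositions~\ref{nondeg1} and~\ref{nondeg2} give more than the step bound: their proofs conclude $\phi(\n)=0$, i.e.\ $[\n,\dalg]=0$. In particular $[\dalg,\dalg]\subset[\n,\dalg]=0$. I will simply point to the last line of each of those proofs.

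\textbf{Definite $\dalg$.} If $\ip|_{\dalg}$ is definite, then every $\phi(Z)$ is a nilpotent skew-adjoint operator on a definite space, hence zero. So again $[\n,\dalg]=0$.

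\textbf{Split $(1,2)$ with $\dalg$ Lorentzian.} By Proposition~\ref{prop:universal}, $\kk=\phi(\n)$ is a nilpotent subalgebra of $\so(\dalg)$ consisting of nilpotent elements. By Engel's theorem and \cite{Mos}, as quoted before Subsection~\ref{ss:dl}, $\kk$ can be conjugated into the nilradical of an Iwasawa subalgebra of $\so(m-1,1)$. That nilradical is abelian (it is $\mathbb R^{m-2}$), so $\kk$ is abelian. Since $\phi$ is a homomorphism, $\phi([\n,\n])=[\kk,\kk]=0$. Thus $\phi(\dalg)=0$, which is the remark recorded right after~\eqref{eq:L-normal}, and hence $[\dalg,\dalg]=0$.

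The step requiring care is this Lorentzian case. One must confirm that the Iwasawa nilradical of $\so(m-1,1)$ is abelian: the real rank is one and the restricted root system has only $\pm\alpha$, with no $2\alpha$. One must also confirm that the conjugation into it is legitimate for a nilpotent algebra of nilpotent skew operators. Both facts are exactly what the paper cites, so the corollary follows by combining these observations across all splits.
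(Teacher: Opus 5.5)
Your proposal is correct and is essentially the paper's own proof. The splits $(2,1)$ and $(3,0)$, and the definite-$\dalg$ case, already give $[\n,\dalg]=0$; in the Lorentzian-$\dalg$ split, the abelian Iwasawa nilradical forces $\phi(\dalg)=[\kk,\kk]=0$, hence $[\dalg,\dalg]=0$, and you only spell out more explicitly what the paper's two-line proof compresses.
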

\begin{proof}
The three excluded splits are at most two-step. In the remaining Lorentz-derived split, the Iwasawa nilradical is abelian, so $\phi(\dalg)=0$.
\end{proof}

\subsection{Example}
\label{ss:ex}

The following example shows that Theorem~\ref{th:nondeg} on the $4$-stepness of $\n$. We construct a nilpotent, metric Lie algebra $(\n,\ip)$ as the following:
 
 Let $\n$ have ordered basis
\[
p_1,p_2,\ell_1,\ell_2,s_0,a,u_1,u_2,b.
\]
The only nonzero brackets, apart from skew-symmetry, are
\begin{align}
[p_1,p_2]&=a,\label{eq:ex-brackets}\\
[p_1,a]&=u_1,&[p_2,a]&=u_2,\nonumber\\
[p_1,u_1]&=-b,&[p_2,u_2]&=-b,\nonumber\\
[p_1,s_0]&=u_1,&[p_2,s_0]&=u_2,\nonumber\\
[p_1,\ell_2]&=b,&[p_2,\ell_1]&=-b.\nonumber
\end{align}
The nonzero metric pairings are
\begin{equation}\label{eq:ex-metric}
\<{p_i}, {\ell_j}\>=\delta_{ij},\qquad\< a, b\>=1,\qquad
\<{s_0}, {s_0}\>=\<{u_1}, {u_1}\>=\<{u_2}, {u_2}\>=1.
\end{equation}
There are three hyperbolic pairs and three positive lines, so the signature is $(6,3)$. The derived algebra and its orthogonal complement are
\[
\dalg=\Span(a,u_1,u_2,b),\qquad
\mathfrak v=\Span(p_1,p_2,\ell_1,\ell_2,s_0),
\]
with signatures $(3,1)$ and $(3,2)$, respectively.

Put $J=\left(\begin{smallmatrix}0&1\\-1&0\end{smallmatrix}\right)$. Write coordinates in the ordered blocks as
\[
T=(p,\ell,y,x,u,\beta),\qquad p,\ell,u\in\mathbb R^2,
\]
where $y,x,\beta$ are the coordinates of $s_0,a,b$. Then~\eqref{eq:ex-brackets} is equivalent to
\begin{equation}\label{eq:ex-bracket-formula}
\begin{split}
[T,T']={}&\bigl(0,0,0,\ p^tJp',\ (x'+y')p-(x+y)p',\\
&\hspace{33mm}-p^tu'+(p')^tu+p^tJ\ell'-(p')^tJ\ell\bigr).
\end{split}
\end{equation}
The $u$ component of Jacobi is the two-dimensional identity
\[
(p^tJp')p''+((p')^tJp'')p+((p'')^tJp)p'=0.
\]
The $b$ component cancels in pairs by symmetry of the Euclidean dot product; all other components vanish. Thus this is a Lie bracket. Its lower central series is
\begin{align*}
C^2\n&=\Span(a,u_1,u_2,b),&C^3\n&=\Span(u_1,u_2,b),\\
C^4\n&=\mathbb Rb,&C^5\n&=0.
\end{align*}
In particular $[p_1,[p_1,[p_1,p_2]]]=-b\ne0$.

For $r,z\in\mathbb R$ and $w\in\mathbb R^2$, define
\begin{equation}\label{eq:ex-derivation}
\begin{split}
\mathcal A(r,w,z)(p,\ell,y,x,u,\beta)
=\bigl(&rJp,\ rJ\ell+yJw+zJp,\\
&w^tJp,\ 0,\ rJu+xw,\ -w^tu\bigr).
\end{split}
\end{equation}
The four generators below describe this linear family; unlisted images are zero.
\begin{center}
\small
\begin{tabularx}{\textwidth}{@{}lX@{}}
\toprule
Generator & Nonzero images\\\midrule
$R=\mathcal A(1,0,0)$ & $p_1\mapsto-p_2,\ p_2\mapsto p_1;\ \ell_1\mapsto-\ell_2,\ \ell_2\mapsto\ell_1;\ u_1\mapsto-u_2,\ u_2\mapsto u_1$\\
$B_1=\mathcal A(0,(1,0)^t,0)$ & $p_2\mapsto s_0,\ s_0\mapsto-\ell_2,\ a\mapsto u_1,\ u_1\mapsto-b$\\
$B_2=\mathcal A(0,(0,1)^t,0)$ & $p_1\mapsto-s_0,\ s_0\mapsto\ell_1,\ a\mapsto u_2,\ u_2\mapsto-b$\\
$Z=\mathcal A(0,0,1)$ & $p_1\mapsto-\ell_2,\ p_2\mapsto\ell_1$\\\bottomrule
\end{tabularx}
\end{center}
Equation~\eqref{eq:ex-metric} shows that each generator is skew-adjoint. Substitution into~\eqref{eq:ex-bracket-formula} verifies
\[
\mathcal A[T,T']=[\mathcal AT,T']+[T,\mathcal AT'].
\]
Their nonzero commutators are
\[
[R,B_1]=-B_2,\qquad[R,B_2]=B_1,\qquad[B_1,B_2]=-Z;
\]
$Z$ commutes with all four generators. They therefore span a Lie algebra $\h$ of skew derivations.

\begin{theorem}\label{thm:example}
The metric Lie algebra~\eqref{eq:ex-brackets}--\eqref{eq:ex-metric} is four-step GO. A linear geodesic graph is
\begin{equation}\label{eq:ex-graph}
D(T)=\mathcal A(-x,\,-p+Ju,\,y-\beta).
\end{equation}
\end{theorem}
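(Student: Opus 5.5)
The plan is to verify the criterion of the Geodesic Lemma directly, with the proposed linear map $D$ supplying $A(T)=D(T)\in\h$. Step four and the Lie algebra structure are already established above: the lower central series is computed, $\h$ is a Lie algebra of skew derivations, and $[p_1,[p_1,[p_1,p_2]]]=-b\ne0$. So the only new content is the GO condition. Since $\g=\h\oplus\n$ with $\h$ acting by derivations, $[T+A,T']_\n=[T,T']+AT'$. Using skew-adjointness of $A$, condition~\eqref{eq:golemma} becomes
\[
\<[T,T'],T\>=\<T',\,D(T)T\>+k\<T,T'\>\qquad\forall T'\in\n .
\]
I will rewrite the left side as $\<T',\ad_T^{*}T\>$ and prove the vector identity $\ad_T^{*}T=D(T)T+k(T)\,T$. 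I expect $k(T)=0$ to suffice, so I will try that first.

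\emph{Step 1: compute $\ad_T^{*}T$.} By~\eqref{eq:ex-metric}, only three components of $[T,T']$ pair nontrivially with $T$: the $a$-coordinate pairs with $\beta$, the $u$-block pairs with $u$, and the $b$-coordinate pairs with $x$. From~\eqref{eq:ex-bracket-formula},
\[
\<[T,T'],T\>=\beta\,p^tJp'+u^t\bigl((x'+y')p-(x+y)p'\bigr)
+x\bigl(-p^tu'+(p')^tu+p^tJ\ell'-(p')^tJ\ell\bigr).
\]
I will regroup this by the coordinates of $T'=(p',\ell',y',x',u',\beta')$. Each coefficient is then converted into a vector using the dual pairing: the $p'$-coefficient becomes an $\ell$-component, the $\ell'$-coefficient a $p$-component, the $x'$-coefficient a $\beta$-component, the $\beta'$-coefficient an $x$-component, while the $y'$ and $u'$ coefficients stay in place. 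This gives $\ad_T^{*}T$ explicitly in block form. For example, its $p$-block is $-xJ^tp=xJp$, and its $\beta$-component is $u^tp$.

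\emph{Step 2: compute $D(T)T$.} Substitute $r=-x$, $w=-p+Ju$, $z=y-\beta$ into~\eqref{eq:ex-derivation} and evaluate on $T$ itself. The blocks are
\[
\bigl(-xJp,\ -xJ\ell+yJ(-p+Ju)+(y-\beta)Jp,\ (-p+Ju)^tJp,\ 0,\ -xJu+x(-p+Ju),\ -(-p+Ju)^tu\bigr).
\]
These simplify using $p^tJp=0$, $u^tJu=0$ and $J^2=-I$.

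\emph{Step 3: compare blockwise.} Each coordinate block of Step 1 is matched against the corresponding block of Step 2. There are six such comparisons, and each reduces to a short identity in $\mathbb R^2$ involving $J$, the cancellation $p^tJp=0$, and $J^t=-J$.

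\textbf{Main obstacle.} The main obstacle is purely sign bookkeeping: tracking the dualization $p\leftrightarrow\ell$, $a\leftrightarrow b$ together with the convention $\<A T',T\>=-\<T',AT\>$. If one block, most likely the $p$-block or the $\ell$-block, is off by a sign, I will recheck against the generator table for $R,B_1,B_2,Z$ before suspecting the formula for $D$. Only if a residual proportional to $T$ remains, which is possible only for null $T$, will I absorb it into $k(T)$. Finally, since $D$ is linear in $T$, it is automatically a linear geodesic graph, and by the Geodesic Lemma this completes the proof of the theorem.
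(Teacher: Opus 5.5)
Your plan is the paper's proof: compute $q(T)=\ad_T^{*}T$ from the bracket and metric, verify $D(T)T=q(T)$ block by block using $J^2=-I$, $J^tJ=I$ and $u^tJu=0$, and conclude the geodesic lemma with $k(T)=0$ for every $T$, null vectors included. One slip needs fixing in Step 1. The only $\ell'$-term is $x\,p^tJ\ell'=(\ell')^{t}(xJ^tp)$, so the $p$-block of $\ad_T^{*}T$ is $xJ^tp=-xJp$, not $xJp$. This agrees exactly with the $p$-block $rJp=-xJp$ of $D(T)T$, and the other five blocks match as well, so nothing has to be absorbed into $k$ and neither $D$ nor the generator table needs rechecking.
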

\begin{proof}
Define $q(T)$ by $\<{q(T), }{T'}\>=\<{[T,T']}, {T}\>$. Directly from the bracket and metric,
\begin{equation}\label{eq:ex-gradient}
q(T)=\bigl(-xJp,\ -xJ\ell-yu-\beta Jp,\ p^tu,\ 0,\ -xp,\ p^tu\bigr).
\end{equation}
Equations~\eqref{eq:ex-derivation}--\eqref{eq:ex-graph}, together with $J^2=-I$ and $u^tJu=0$, give $D(T)T=q(T)$ component by component. As $D(T)$ is a skew derivation,
\[
\<{D(T)T'+[T,T']}, {T}\>
=-\<{T'}, {D(T)T}\>+\<{T'}, {q(T)}\>=0.
\]
This proves the geodesic lemma with $k=0$ for every $T$, including all null vectors. The lower central series proves the exact step.
\end{proof}

\begin{remark}
The graph also satisfies $[A,D(T)]=D(AT)$ for the four generators $A$, by direct substitution. Thus it meets the equivariance condition as well as linearity; the natural-reductivity criterion of \cite[Proposition 2]{NW} applies to the corresponding connected presentation. GO alone already disproves the earlier index-gap assertion. For comparison, naturally reductive pseudo-Riemannian metrics on two-step nilpotent Lie groups, including a characterization when the center is nondegenerate, are studied in \cite{Ova13}. The center condition in that work differs from the nondegeneracy assumption on the derived algebra used here.

In the notation of Section~\ref{sec:lorentz-derived},
\[
\Psi p_i=u_i,\qquad C=Q=\Span(\ell_1,\ell_2,s_0),\qquad
L=Q^\perp=\Span(\ell_1,\ell_2).
\]
Hence $Q/L$ is a positive definite line. The maximal null plane is an actual four-step configuration, not an obstruction to GO. 
\end{remark}

\begin{remark}
The example of \cite[Section 2.3]{CNWZ} has Lorentzian $\dalg$, and $\mathfrak v=\Span(f_1,\ldots,f_8)$ has signature $(5,3)$. In its displayed basis,
\[
C=\Span(f_3,\ldots,f_8),\qquad\rad C=\Span(f_7,f_8),
\]
and $C/\rad C$ has signature $(3,1)$. This section exhibits the four-step example for which this quotient is positive definite instead. The negative screen direction in the earlier example is therefore not a necessary index cost.
\end{remark}

\section{Degenerate derived algebras: invariant-kernel descent}\label{sec:degenerate}

In this section we consider the case when the restriction of the inner product $\ip$ to the derived algebra $\dalg$ is degenerate.

Return to arbitrary ambient index, suppose $\dalg$ is degenerate, and set
\[
E=\rad\dalg=\dalg\cap\dalg^\perp,\qquad\mathfrak v=\dalg^\perp.
\]
Here $\mathfrak v$ is not a complementary space. Both $E$ and $E^\perp$ are $\h$-invariant, and $\dalg\subset E^\perp$.

\begin{proposition}\label{prop:quotient}
 Let $(M = G/H, ds^2)$ be a connected pseudo Riemannian $G$-$GO$ nilmanifold where $G =N \rtimes H$, with $N$ nilpotent. Let $0\ne F\subset E$ be $\h$-invariant and suppose
\begin{equation}\label{eq:Fideal}
[F,F^\perp]\subset F.
\end{equation}
Define the metric nilpotent Lie algebra $\mathfrak m_0=F^\perp/F$ with the inner product $\ip_0$ induced from $F^\perp$ and the pseudo Riemannian nilmanifold $(M_0 = G_0/H_0, ds_0^2)$, where $G_0 =N_0 \rtimes H_0$, with $N_0$ the \emph{(}simply connected\emph{)} Lie group whose Lie algebra is $\m_0$, $ds_0^2$ is the left-invariant metric on $M_0$ defined by $\ip_0$, and $H_0$ is the maximal connected group of pseudo-orthogonal automorphisms of $\ip_0$.

   Then $(M_0, ds_0^2)$ is a $G_0$-$GO$ pseudo Riemannian nilmanifold. If $\n$ has signature $(p,q)$ and $k=\dim F$, the signature of $\mathfrak m_0$ is $(p-k,q-k)$.
\end{proposition}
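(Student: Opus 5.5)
First I would check that $\m_0=F^\perp/F$ really is a metric nilpotent Lie algebra. Since $F\subset E\subset\dalg$ and $\dalg\subset E^\perp\subset F^\perp$, we get $[F^\perp,F^\perp]\subset\dalg\subset F^\perp$. So $F^\perp$ is a subalgebra. By \eqref{eq:Fideal}, $F$ is an ideal of $F^\perp$. Hence $\m_0$ inherits a Lie bracket, and it is nilpotent as a quotient of a subalgebra of $\n$. Because $F\subset E\subset\dalg^\perp$ and $\dalg\subset F^\perp$, we also have $F\subset\dalg$ with $F\perp\dalg$. In particular $F$ is totally isotropic, and $F\subset F^\perp$ is exactly the radical of $\ip|_{F^\perp}$. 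Therefore $\ip$ descends to a nondegenerate form $\ip_0$ on $\m_0$.

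For the signature, I would pick a $k$-dimensional isotropic $F'$ with $F\oplus F'$ nondegenerate, hyperbolic of signature $(k,k)$. Then $F^\perp=F\oplus(F\oplus F')^\perp$ and $\m_0\cong(F\oplus F')^\perp$, which has signature $(p-k,q-k)$.

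Next comes the isotropy. Every $A\in\h$ preserves $F$ (by hypothesis) and hence preserves $F^\perp$, since it is skew-adjoint. So $A$ induces a map $\bar A$ on $\m_0$. This $\bar A$ is a derivation, because $\ad(A)$ is a derivation of $\n$ restricted to the subalgebra $F^\perp$ and it preserves the ideal $F$. It is also skew-adjoint for $\ip_0$. So $A\mapsto\bar A$ maps $\h$ into the Lie algebra of skew derivations of $(\m_0,\ip_0)$, which is $\mathrm{Lie}(H_0)$ by the maximality of $H_0$. It then suffices to verify the Geodesic Lemma for $\m_0$ using elements $\bar A$.

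Finally I would verify \eqref{eq:golemma} on $\m_0$. Take $\bar T\in\m_0$ and lift it to $T\in F^\perp$; changing the lift by an element of $F$ does not change any pairing below. Apply \eqref{eq:golemma} in $\n$ to $T$, giving $A\in\h$ and $k\in\br$ with $\<[T+A,T'],T\>=k\<T,T'\>$ for all $T'\in\n$. Restrict to $T'\in F^\perp$. Then $[T,T']\in\dalg\subset F^\perp$ and $[A,T']\in F^\perp$, and pairings of elements of $F^\perp$ factor through $\ip_0$. So we get $\<[\bar T+\bar A,\bar T'],\bar T\>_0=k\<\bar T,\bar T'\>_0$.

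The main subtlety is the null case. If $\bar T$ is null in $\m_0$, the lift $T$ is null in $\n$, so the same $k$ works and no inconsistency arises. If $\bar T$ is non-null, then $T$ is non-null and $k=0$ automatically. Thus the $G$-$GO$ property passes to $G_0$. I expect the only delicate point to be justifying that the induced $\bar A$ lie in $\mathrm{Lie}(H_0)$ (maximality of $H_0$) and that the choice of lift is irrelevant. Both follow from $F$ being the radical of $\ip|_{F^\perp}$ and $\h$-invariant.
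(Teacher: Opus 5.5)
Your proposal is correct and follows the paper's route. The paper likewise uses $\dalg\subset F^\perp$ together with \eqref{eq:Fideal} to get the quotient bracket, identifies $F$ as the radical of $\ip|_{F^\perp}$, descends each $A\in\h$ to a skew derivation $\overline A$, and restricts the test vectors in \eqref{eq:golemma} to $F^\perp$ before passing to the quotient, null vectors included. You add only details the paper leaves implicit, notably the signature count via a hyperbolic complement $F\oplus F'$; also, $F^\perp$ is in fact an ideal of $\n$, though your subalgebra claim suffices.
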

\begin{proof}
The subspace $F^\perp$ contains $\dalg$, so it is an ideal of $\n$. Condition~\eqref{eq:Fideal} makes $F$ an ideal of $F^\perp$, which defines the quotient bracket. The radical of the metric on $F^\perp$ is exactly $F$, so the induced metric is nondegenerate and has the stated signature.

Both spaces are $\h$-invariant. Thus every $A\in\h$ induces a skew-adjoint derivation $\overline A$ of the quotient. For a lift $T\in F^\perp$ of any $\overline T\in\mathfrak m_0$, choose $A,k(T)$ in~\eqref{eq:golemma} and restrict the test vectors to $F^\perp$. Passing to the quotient gives
\[
\<{[\overline T+\overline A,\overline T']_0}, {\overline T}_0\>
=k(T)\<{\overline T}, {\overline T'}_0\>.
\]
This is the geodesic lemma for the quotient, including null vectors. Integration with the connected group of skew automorphisms gives a reductive GO nilmanifold.
\end{proof}
Centrality strengthens this conclusion: if $[F,F^\perp]=0$, then $F^\perp$ is a central extension of $\mathfrak m_0$ by $F$, and $\n/F^\perp$ is an abelian $k$-dimensional algebra. Together with the induced Witt pairing, this is the $2k$-dimensional double extension used in \cite{NW,CNWZ}. An arbitrary quotient satisfying only~\eqref{eq:Fideal} is not yet a central-extension presentation. 

We prove the following.
\begin{theorem}\label{thm:descent}
Suppose $0\ne F_0\subset E$ is $\h$-invariant and $[F_0,F_0^\perp]\subset F_0$. Then there is a nonzero $\h$-invariant subspace $F\subset F_0$ such that
\begin{equation}\label{eq:Fcentral}
[F,F^\perp]=0.
\end{equation}
Consequently $\n$ admits a GO double-extension reduction by $2\dim F$ dimensions.
\end{theorem}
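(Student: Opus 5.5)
Given $F_0$ with $[F_0,F_0^\perp]\subset F_0$, I will produce a central subspace inside it by a fixed-point/descent argument driven by nilpotency, and then use the GO condition to get orthogonality. The natural candidate is
\[
F=\{e\in F_0:\ [e,F_0^\perp]=0\}.
\]
Since $F_0$ and $F_0^\perp$ are $\h$-invariant and $\h$ acts by derivations, $F$ is $\h$-invariant.

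It is nonzero by Engel's theorem. Each operator $\ad(T)$, $T\in F_0^\perp$, preserves $F_0$ by hypothesis and is nilpotent. Hence $\ad(F_0^\perp)|_{F_0}$ is a nilpotent Lie algebra of nilpotent endomorphisms, and it has a common kernel vector in $F_0\neq0$. So $F\neq0$.

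The main obstacle is that $F\subset F_0$ gives $F^\perp\supset F_0^\perp$. Centralizing $F_0^\perp$ is therefore weaker than $[F,F^\perp]=0$, and the missing directions lie in $F^\perp\setminus F_0^\perp$. I will handle these with the GO equation, imitating the argument that produced \eqref{eq:e-central} from \eqref{eq:GO-v}.

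Take $e\in F$ and $Y'\in F^\perp$. Since $F\subset E=\rad\dalg$ is isotropic, $e$ is null. Apply \eqref{eq:golemma} with $T=e+X$, where $X$ ranges over suitable vectors of $\dalg$, and also with $T=e$ directly, to obtain $\<[e+A,Y'],e\>=k\<e,Y'\>=0$. Invariance of $F$ gives $[A,Y']\in F^\perp$, hence $\<[A,Y'],e\>=0$. Therefore $\<[e,Y'],e\>=0$.

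To upgrade this, pair $[e,Y']$ against all of $\n$. Since $F\subset\dalg^\perp$, we have $\<[Y',Z],e\>$, and by polarization in $e$ over $F$ I will aim for $\<[F,F^\perp],\n\>=0$. I expect this pairing step to need care. If it fails in general, I will fall back on an induction. Replace $F_0$ by $F$ and check $[F,F^\perp]\subset F$, using that $[F,F^\perp]\subset\dalg\cap F^{\perp}$ together with invariance. Then iterate the Engel step, which strictly decreases dimension until $[F,F^\perp]=0$. Termination is forced because the dimension drops at each stage while $F$ stays nonzero.

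Finally, once \eqref{eq:Fcentral} holds, Proposition~\ref{prop:quotient} applies with this $F$. Together with the centrality remark following it, this gives the double-extension reduction by $2\dim F$ dimensions to the GO nilmanifold $\m_0=F^\perp/F$.
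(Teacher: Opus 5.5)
\textbf{Verdict: there is a genuine gap.} Your fallback iteration has the right overall structure, but the step that makes it work is missing.

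\textbf{The missing inclusion.} To reapply Engel to $F_1=\{e\in F_0:[e,F_0^\perp]=0\}$, you need $\ad(F_1^\perp)$ to preserve $F_1$, that is, $[F_1,F_1^\perp]\subset F_1$. Your justification, ``$[F,F^\perp]\subset\dalg\cap F^\perp$ together with invariance,'' carries no information. Since $F\subset E\subset\dalg^\perp$, you have $\dalg\subset F^\perp$, so $\dalg\cap F^\perp=\dalg$, and $[F,F^\perp]\subset\dalg$ is automatic.

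\textbf{The GO computation uses the wrong roles.} With $T=e$ and $T'=Y'$ you obtain $\<[e,Y'],e\>=0$. This holds trivially, because $[e,Y']\in\dalg$ and $e\in\dalg^\perp$. The direct attempt to get $\<[F,F^\perp],\n\>=0$ is never completed. Nothing you wrote shows that a single Engel step already gives centrality. The available argument yields only the inclusion into $F_1$, which is exactly why one has to iterate.

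\textbf{The missing idea.} Use $e\in F_1$ as the test vector $T'$ in \eqref{eq:golemma}, and let $T$ range over all of $F_1^\perp$.
\begin{itemize}
\item The scalar term $k(T)\<T,e\>$ vanishes.
\item The isotropy term $\<[A,e],T\>$ vanishes, because $[A,e]\in F_1$ and $T\in F_1^\perp$.
\item Hence $\<[e,T],T\>=0$ on $F_1^\perp$. Polarization gives $\<[e,T],U\>=-\<T,[e,U]\>$ for all $T,U\in F_1^\perp$.
\item Take $U\in F_0^\perp\subset F_1^\perp$. There $[e,U]=0$, so $[e,T]\in(F_0^\perp)^\perp=F_0$.
\item For $U\in F_0^\perp$, Jacobi gives $[U,[e,T]]=[[U,e],T]+[e,[U,T]]=0$. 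This uses $[U,e]=0$ and $[U,T]\in\dalg\subset F_0^\perp$, which $e$ centralizes.
\item Therefore $[e,T]\in F_1$.
\end{itemize}

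\textbf{With the inclusion in hand.} Your Engel iteration goes through. The dimension strictly drops until $F_{i+1}=F_i$, at which point every element of $F_i$ centralizes $F_i^\perp$, so $[F_i,F_i^\perp]=0$. Your final appeal to Proposition~\ref{prop:quotient} and the central-extension remark is then fine.
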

\begin{proof}
Let $F_i\ne0$ be an invariant subspace with $[F_i,F_i^\perp]\subset F_i$, and put $I_i=F_i^\perp$. The ideal $I_i$ acts on $F_i$ by nilpotent adjoint endomorphisms. Engel's theorem gives the nonzero common kernel
\begin{equation}\label{eq:descent-sequence}
F_{i+1}=\{e\in F_i:[e,I_i]=0\}.
\end{equation}
The derivation identity shows that $F_{i+1}$ is $\h$-invariant. If $F_{i+1}=F_i$, then~\eqref{eq:Fcentral} already holds for $F_i$.

Otherwise we prove that $F_{i+1}$ also satisfies the ideal condition. Fix $e\in F_{i+1}$. For every $T\in F_{i+1}^\perp$, take $T'=e$ in~\eqref{eq:golemma}. Both the $k(T)$ term and the isotropy term vanish, since $\< T, e\>=0$ and $[A,e]\in F_{i+1}$. Thus
\[
\<{[e,T]}, T\>=0, \qquad T\in F_{i+1}^\perp.
\]
Polarization gives
\[
\<{[e,T]}, U\>=-\< T, {[e,U]}\>,
\qquad T,U\in F_{i+1}^\perp.
\]
For $U\in I_i\subset F_{i+1}^\perp$, the right-hand side is zero by~\eqref{eq:descent-sequence}. Hence $[e,T]\in I_i^\perp=F_i$. Moreover, for $U\in I_i$,
\[
[U,[e,T]]=[[U,e],T]+[e,[U,T]]=0.
\]
Here $[U,e]=0$, while $[U,T]\in\dalg\subset I_i$ and $e$ centralizes $I_i$. Therefore $[e,T]\in F_{i+1}$, proving
\[
[F_{i+1},F_{i+1}^\perp]\subset F_{i+1}.
\]
The construction may be repeated. Each nonterminal step strictly decreases the positive integer $\dim F_i$, so it terminates at a nonzero $F$ satisfying~\eqref{eq:Fcentral}. Proposition~\ref{prop:quotient} and the central-extension observation complete the proof.
\end{proof}

\subsection{Semidefinite derived algebras}

First suppose that $\ip|_{\dalg}$  is semidefinite. We have
\begin{proposition}\label{thm:semidefinite}
Suppose the metric on $\dalg$ is semidefinite and $E=\rad\dalg\ne0$. Then
\begin{equation}\label{eq:Eideal}
[E,E^\perp]\subset E.
\end{equation}
There exists a nonzero $\h$-invariant $F\subset E$ satisfying $[F,F^\perp]=0$. Thus $\n$ is a double extension of a GO metric nilpotent Lie algebra of strictly smaller index. There is no restriction on $\dim E$ or the ambient index.
\end{proposition}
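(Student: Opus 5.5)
The plan is to establish~\eqref{eq:Eideal} and then apply Theorem~\ref{thm:descent} with $F_0=E$. The reduction to smaller index then follows from Proposition~\ref{prop:quotient}. If $\dim F=k$, the quotient has signature $(p-k,q-k)$. Since $F$ is totally isotropic and nonzero, $k\ge1$, so the index drops strictly. No bound on $\dim E$ or on the ambient index is used anywhere.

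For~\eqref{eq:Eideal}, fix $e\in E$ and $T\in E^\perp$. I would use the geodesic lemma with test vector $T'=e$, exactly as in the proof of Theorem~\ref{thm:descent}. We have $\<T,e\>=0$ because $T\in E^\perp$, so the $k(T)$ term vanishes. The isotropy term $\<[A,e],T\>$ also vanishes, because $E$ is $\h$-invariant and hence $[A,e]\in E$. This gives $\<[e,T],T\>=0$ for all $T\in E^\perp$ (null or not, since $k$ drops out). Polarizing, $\ad(e)$ is skew on $E^\perp$: $\<[e,T],U\>=-\<T,[e,U]\>$ for $T,U\in E^\perp$.

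It remains to show $[e,T]\in E=\dalg\cap\dalg^\perp$. We already have $[e,T]\in\dalg$, so it suffices to show $[e,T]\perp\dalg$. Take $U\in\dalg\subset E^\perp$. By skewness, $\<[e,T],U\>=-\<T,[e,U]\>$. Here semidefiniteness enters. Consider $X=[e,T]\in\dalg$. Using skewness with $U=X$ gives $\<X,X\>=-\<T,[e,X]\>$. I would apply $\ad(e)$ iteratively and use nilpotency of $\ad(e)$ to conclude $\<X,X\>=0$, hence $X\in E$ by semidefiniteness.

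In more detail, the restriction of $\ad(e)$ to the invariant subspace $\dalg$ is skew for the semidefinite form $\ip'$. It therefore descends to a skew nilpotent operator on the definite space $\dalg/E$, which must be zero, as in the proof of Proposition~\ref{nondeg1}. Thus $[e,\dalg]\subset E$. For $U\in\dalg$ this gives $\<[e,T],U\>=-\<T,[e,U]\>=0$, because $[e,U]\in E$ and $T\in E^\perp$. Hence $[e,T]\in\dalg\cap\dalg^\perp=E$.

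The main obstacle is this last step, namely producing the descent of $\ad(e)$ to $\dalg/E$ and using semidefiniteness correctly. One must check that $\ad(e)$ preserves $E$. This holds by skewness: for $e'\in E$ and $U\in\dalg$, $\<[e,e'],U\>=-\<e',[e,U]\>=0$. Once this is verified, the definite-quotient argument closes the proof, and Theorem~\ref{thm:descent} supplies the central $F$.
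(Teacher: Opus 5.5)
Your proposal is correct and follows the paper's argument. The geodesic lemma with test vector $T'=e$ gives skewness of $\ad(e)$, which descends to a nilpotent skew operator on the definite quotient $\dalg/E$ and hence vanishes; Theorem~\ref{thm:descent} with $F_0=E$ then supplies $F$. The only difference is cosmetic: the paper splits $T=X+Y$ with $X\in\dalg$, $Y\in\vg$ and reads off $[E,\vg]\subset E$ directly from the cross term, whereas you polarize over all of $E^\perp$ and recover $[e,E^\perp]\subset E$ from $[e,\dalg]\subset E$ by duality.
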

\begin{proof}
We have $E^\perp=\dalg+\mathfrak v$. Take $e\in E$, $X\in\dalg$ and $Y\in\mathfrak v$. With $T=X+Y$ and $T'=e$ in~\eqref{eq:golemma}, the isotropy and $k(T)$ terms vanish since $E$ is invariant and orthogonal to $T$. As $[e,T]\in\dalg$, which is orthogonal to $Y$, we obtain
\[
\<{[e,X+Y]}, X\>=0.
\]
Setting $Y=0$ and then subtracting gives
\begin{equation}\label{eq:semidefinite-identities}
\<{[e,X]}, X\>=0,\qquad\<{[e,Y]}, X\>=0.
\end{equation}
The second identity implies $[E,\mathfrak v]\subset E$. In particular $[e,E]\subset E$, because $E\subset\mathfrak v$. Thus $\ad(e)$ induces an endomorphism on $\dalg/E$. The first identity makes that endomorphism skew-adjoint for its definite induced form. It is nilpotent, and hence zero. This proves $[E,\dalg]\subset E$, and therefore~\eqref{eq:Eideal}. Apply Theorem~\ref{thm:descent} with $F_0=E$, we have $\n$ is a double extension of a GO metric nilpotent Lie algebra of strictly smaller index. 
\end{proof}
The direct quotient $E^\perp/E$ is also GO by Proposition~\ref{prop:quotient}, with index reduced by $\dim E$. If $E$ is not central in $E^\perp$, it is the terminal $F$, rather than necessarily $E$, that supplies the central-extension presentation.

\Needspace{7\baselineskip}
\begin{corollary}\label{cor:radline}
Under the hypotheses of Proposition~\ref{thm:semidefinite}, if $E=\mathbb Re$, then \mbox{$[e,e^\perp]=0$}.
\end{corollary}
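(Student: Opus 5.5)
The plan is to apply Theorem~\ref{thm:descent} with $F_0=E=\mathbb Re$, using the ideal condition~\eqref{eq:Eideal} already established in Proposition~\ref{thm:semidefinite}. The descent produces a nonzero $\h$-invariant subspace $F\subset E$ with $[F,F^\perp]=0$. Since $\dim E=1$, the only nonzero subspace of $E$ is $E$ itself, so $F=\mathbb Re$. Moreover $F^\perp=e^\perp$ (the orthogonal complement taken in $\n$), and~\eqref{eq:Fcentral} reads exactly $[e,e^\perp]=0$.

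To make this self-contained, I would also check the first step of the descent directly. By~\eqref{eq:Eideal}, $[e,e^\perp]\subset\mathbb Re$. Write $[e,T]=\lambda(T)e$ for $T\in e^\perp$. Then $\ad(e)$ restricted to the ideal $e^\perp$ is nilpotent, since $\n$ is nilpotent. It maps $e^\perp$ into $\mathbb Re$, and it kills $e$ because $[e,e]=0$. Hence $\ad(e)^2=0$ on $e^\perp$. This alone does not force $\lambda=0$; the map $T\mapsto\lambda(T)e$ could still be nonzero.

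The real content is the Engel step. The ideal $I_0=e^\perp$ acts on the one-dimensional space $F_0=\mathbb Re$ by the scalars $\mu(U)$, where $[U,e]=\mu(U)e$. Nilpotency of $\ad(U)$ forces each such scalar to be zero. So every $U\in e^\perp$ centralizes $e$, which means $F_1=F_0$, and the descent terminates immediately. This gives $[e,e^\perp]=0$ directly, consistent with $\lambda(T)=-\mu(T)=0$.

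The main point to watch is only that nilpotency of $\ad(U)$ on $\n$ forces its eigenvalue on the invariant line $\mathbb Re$ to vanish. I expect no real obstacle here: the corollary is the one-dimensional specialization of Theorem~\ref{thm:descent}.
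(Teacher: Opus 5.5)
Your proposal is correct and, in its self-contained part, is exactly the paper's argument: by \eqref{eq:Eideal} each $\ad(U)$ with $U\in e^\perp$ preserves the line $\mathbb Re$, and nilpotency forces the resulting scalar to vanish. The preliminary appeal to Theorem~\ref{thm:descent} and the digression on $\ad(e)^2=0$ are harmless but unnecessary: with $F_0=\mathbb Re$ the descent stops at its first step for precisely this reason.
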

\begin{proof}
By~\eqref{eq:Eideal}, each $\ad(T)$, $T\in E^\perp$, preserves the line $E$. Its restriction is nilpotent and hence zero.
\end{proof}

\begin{corollary}\label{cor:semi-index3}
 Let $(M = G/H, ds^2)$ be a connected pseudo-Riemannian $G$-geodesic orbit nilmanifold of metric index three, where $G =N \rtimes H$, with $N$ nilpotent. Let $\ip$ denote the inner product on $\n$ induced by $ds^2$. If $\ip|_{\dalg}$ is degenerate and semidefinite, and put $E=\operatorname{rad}\mathfrak d$. Then there exists a nonzero $\mathfrak h$-invariant subspace $F\subset E$ such that
$$[F,F^\perp]=0.$$
Writing $k=\dim F$, we have

$$1\le k\le\dim E\le3,$$
and the quotient
$$\mathfrak m_0=F^\perp/F,$$
equipped with the induced metric, is a $GO$ metric nilpotent Lie algebra of index $3-k$. Consequently, $\mathfrak n$ is a double extension of $\mathfrak m_0$, with
$$\dim\mathfrak n=\dim\mathfrak m_0+2k.$$
In particular, the reduced algebra has index two, one, or zero.
\end{corollary}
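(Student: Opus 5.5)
The corollary is essentially a specialization of Proposition~\ref{thm:semidefinite} to index three, plus a signature count via Proposition~\ref{prop:quotient}. The plan has three steps.

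\emph{Step 1: the subspace $F$.} Since $\ip|_{\dalg}$ is degenerate, $E=\rad\dalg\ne0$. Proposition~\ref{thm:semidefinite} then applies directly: it gives $[E,E^\perp]\subset E$. Theorem~\ref{thm:descent}, applied with $F_0=E$, produces a nonzero $\h$-invariant $F\subset E$ with $[F,F^\perp]=0$. This gives $k=\dim F\ge1$ and $k\le\dim E$.

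\emph{Step 2: the bound $\dim E\le 3$.} $E$ is totally isotropic in $\n$, because $E\subset\dalg\cap\dalg^\perp$, so every vector of $E$ is orthogonal to all of $E$. In a nondegenerate space of signature $(n-3,3)$, a totally isotropic subspace has dimension at most $\min(n-3,3)\le3$. The same bound also follows later from the signature formula, since $3-k\ge0$ is forced. So $1\le k\le\dim E\le3$.

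\emph{Step 3: the quotient $\m_0$ and the double extension.} $F$ satisfies~\eqref{eq:Fcentral}, so in particular~\eqref{eq:Fideal}. Proposition~\ref{prop:quotient} then says $\m_0=F^\perp/F$ with the induced metric is GO, of signature $(n-3-k,\,3-k)$, hence of index $3-k\in\{0,1,2\}$.

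For the dimension count, $\dim F^\perp=n-k$ by nondegeneracy, so $\dim\m_0=n-2k$. Finally, the remark following Proposition~\ref{prop:quotient} applies because $[F,F^\perp]=0$. It presents $F^\perp$ as a central extension of $\m_0$ by $F$, with $\n/F^\perp$ abelian of dimension $k$. Together with the Witt pairing between $F$ and $\n/F^\perp$, this exhibits $\n$ as the $2k$-dimensional double extension of $\m_0$.

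None of these steps is a real obstacle, since the corollary only assembles earlier results. The one point needing care is Step 2. The bound on $\dim E$ should be argued from total isotropy of $E$ in the ambient index-three form, not from the signature of $\dalg$: a degenerate semidefinite $\dalg$ could otherwise have an arbitrary-looking radical.
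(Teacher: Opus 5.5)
Your proposal is correct and follows the paper's own route. The paper gives no separate proof of this corollary: it is assembled from Proposition~\ref{thm:semidefinite}, which applies Theorem~\ref{thm:descent} with $F_0=E$ to produce $F$, and from Proposition~\ref{prop:quotient} with the central-extension remark, which give the index $3-k$, the dimension count and the double-extension presentation. Your argument for $\dim E\le 3$, via total isotropy of $E$ in the index-three ambient form, is sound and makes explicit a bound the paper leaves unstated.
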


\subsection{$\ip|_{\dalg}$ has degeneracy $1$ and index $1$. }

Put $r=\dim E$ and $s=\ind(\dalg/E)$. In this case, we have $(r,s)=(1,1)$.

Apply the proof of \cite[Proposition 3, Lemmas 7--8]{CNWZ} and Proposition~\ref{prop:quotient}, we  prove the following
\begin{proposition}\label{prop:lorentz-screen}
Suppose $E=\mathbb Re$ and the nondegenerate form on $\dalg/E$ is Lorentzian, after a possible reversal of the whole metric. Then
\[
[e,e^\perp]=0.
\]
There is no restriction on the signature of $\dalg^\perp/E$. Consequently $e^\perp/\mathbb Re$ is a GO quotient of index one less than the original index.
\end{proposition}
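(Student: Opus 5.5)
We must prove: $E=\mathbb Re$ with $\dalg/E$ Lorentzian gives $[e,e^\perp]=0$, and then $e^\perp/\mathbb Re$ is a GO quotient of index one less. Note $e^\perp=E^\perp=\dalg+\vg$, where $\vg=\dalg^\perp$ contains $e$.

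The plan is to first establish the ideal condition $[e,e^\perp]\subset\mathbb Re$, and then upgrade it to centrality as in Corollary~\ref{cor:radline}. Once $[E,E^\perp]\subset E$ holds, each $\ad(T)$ with $T\in E^\perp$ preserves the line $E$ and acts nilpotently on it, hence by zero. This gives $[e,e^\perp]=0$. Proposition~\ref{prop:quotient} with $F=E$ then shows that $e^\perp/\mathbb Re$ is GO of signature $(p-1,q-1)$. So everything reduces to proving~\eqref{eq:Eideal} without the semidefiniteness used in Proposition~\ref{thm:semidefinite}.

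Repeating the first computation in the proof of Proposition~\ref{thm:semidefinite} verbatim, with $T=X+Y$, $T'=e$ and $[A,e]\in\mathbb Re$, gives~\eqref{eq:semidefinite-identities}:
\[
\<[e,X],X\>=0,\qquad \<[e,Y],X\>=0\qquad (X\in\dalg,\ Y\in\vg).
\]
The second identity yields $[e,\vg]\subset\dalg\cap\dalg^\perp=E$. The first says that the induced operator $\overline{\ad e}$ on $\dalg/E$ is skew-adjoint for the Lorentzian form and nilpotent. Unlike the definite case, this does not force it to vanish. By the Lorentzian Iwasawa normal form (the same abelian nilradical used in~\eqref{eq:L-normal}), $\overline{\ad e}$ maps a null line $\bar e_1$ into the screen $W_0$ and the screen into the opposite null line $\bar e_m$.

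The remaining task is to kill $\overline{\ad e}$, and I expect this to be the main obstacle. I would follow \cite[Lemmas 7--8]{CNWZ}. First, since $[e,\vg]\subset E$ and $\ad(e)$ is nilpotent on the line, $[e,\vg]=0$. The homomorphism $\phi$ of Proposition~\ref{prop:universal} is not available here, because $\dalg$ is degenerate. Instead, pass to the quotient $\bar\n=\n/\mathbb Re$ where possible, or work directly in $e^\perp$.

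Every element of $\dalg$ is a sum of brackets of elements of $\n=\mathbb Rf\oplus e^\perp$, where $f$ is a null vector dual to $e$. Using Jacobi together with $[e,\vg]=0$ expresses $[e,\dalg]$ modulo $E$ through $[e,[f,\cdot]]$ and $[[e,f],\cdot]$. Next, apply the GO equation~\eqref{eq:golemma} with $T=f+\lambda e+X$ and test vectors in $\dalg$, exploiting that $\h$ preserves $\mathbb Re$ and acts on $f$ modulo $e^\perp$ by the dual character. From this I would derive that $[e,f]$ acts skew-adjointly on $\dalg/E$ and commutes with $\overline{\ad e}$. Comparing the $\bar e_1$-component of $[e,\cdot]$ with the rank bound of Lemma~\ref{lem:Cartan} applied to the $\bar e_1$-coefficient two-form should force $\overline{\ad e}\,\bar e_1=0$, and hence $\overline{\ad e}=0$.

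If this direct route stalls, the fallback is to apply Theorem~\ref{thm:descent} once $[E,E^\perp]\subset E$ is known. Establishing that inclusion remains the essential step in either route.
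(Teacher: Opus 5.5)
\textbf{There is a genuine gap at the key step.}

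\textbf{What is correct.} Your reduction is sound as far as it goes. The identities \eqref{eq:semidefinite-identities} are derived without using semidefiniteness, so $[e,\vg]\subset E$ and hence $[e,\vg]=0$. The correct reason is that each $\ad Y$, $Y\in\vg$, preserves the line $E$ and is nilpotent, not that $\ad e$ is nilpotent on $E$. Once $[e,\dalg]\subset E$ is known, nilpotency and Proposition~\ref{prop:quotient} finish the proof.

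\textbf{What is missing.} Beyond the semidefinite case, the entire content of the proposition is that the nilpotent skew operator $N=\overline{\ad e}$ on the Lorentzian space $\dalg/E$ vanishes. As you acknowledge, this is not proved. The last part of the proposal is a list of intentions (``I would derive'', ``should force''). The fallback through Theorem~\ref{thm:descent} is circular: for the line $F_0=E$ its hypothesis is precisely $[E,E^\perp]\subset E$, and a line has no smaller invariant subspace to descend to.

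\textbf{Why the suggested mechanism fails as stated.} Lemma~\ref{lem:Cartan} needs \eqref{eq:Cartan-vector} with $\omega\neq0$. In Section~\ref{sec:lorentz-derived}, $\omega\neq0$ came from $\n=\dalg\oplus\vg$: the $e_1$-direction of $\dalg$ had to be produced by $[\vg,\vg]$. Here $\dalg+\vg=e^\perp$ is only a hyperplane.

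You can restrict to the subalgebra generated by $\vg$. There the operators $\overline{\ad Y}$, $Y\in\vg$, are commuting null translations on $\dalg/E$ with parameters $\Psi Y$, so \eqref{eq:Cartan-vector} does hold on $\vg$. Now take $Y_3=e$. Then $\Psi e\neq0$ exactly when $N\neq0$, and $\omega(e,\cdot)=0$ because $[e,\vg]=0$. The identity yields only $\omega|_{\vg\times\vg}=0$. That is no contradiction. It merely says the $\bar e_1$-direction of $\dalg$ is produced by brackets with $f$ (or by $[\dalg,\dalg]$, where you have no skewness), and your argument does not exclude that.

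Likewise, the skewness of $\ad[e,f]$ on $\dalg/E$ and its commutation with $N$ are asserted, not derived.

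\textbf{The real obstruction.} It is the character $\alpha$ with $Ae=\alpha(A)e$, which you mention but never dispose of.
\begin{itemize}
\item If $\alpha\equiv0$, Lemma~\ref{lem:fixed-small-radical} already makes $e$ central.
\item If $\alpha\neq0$, one has $[\overline A,N]=\alpha N$ on $\dalg/E$. The eigenvalue argument of Theorem~\ref{thm:radical-line-complete} that kills $\alpha$ no longer applies. In the index-three configuration $(\dim E,\ind(\dalg/E))=(1,1)$, the budget $3=1+\ind(\dalg/E)+\ind(\dalg^\perp/E)$ forces $\ind(\dalg^\perp/E)=1$. So $\h$ may act on $\dalg^\perp/E$ with real eigenvalues.
\end{itemize}

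\textbf{How the paper handles it.} The paper does not prove this step internally. It imports the argument of \cite[Proposition~3, Lemmas~7--8]{CNWZ} and asserts that no restriction on the signature of $\dalg^\perp/E$ is needed. You must either carry out that argument in the present signature or invoke it explicitly. Your substitute does not establish $[e,\dalg]\subset E$.
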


\subsection{$\ip|_{\dalg}$ has degeneracy $1$ and index $2$. } In this case, we have $(r, s)=(1,2)$. Now write $W=\dalg/E$ as a metric vector space. 

\begin{lemma}\label{lem:spectrum-generation}
Let $A$ be a derivation of a finite-dimensional nilpotent Lie algebra $\n$, and put $Q=\n/\dalg$. Every complex eigenvalue of $A$ on $\n$ is a sum of eigenvalues of its induced operator on $Q$. If a Lie subalgebra $\mathfrak c$ satisfies $\mathfrak c+\dalg=\n$, then $\mathfrak c=\n$.
\end{lemma}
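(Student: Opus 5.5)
The plan has two parts, one for each assertion of Lemma~\ref{lem:spectrum-generation}. Both rest on the same fact: $\n/\dalg$ generates $\n$ under brackets. I will work over $\mathbb C$ by complexifying $\n$ and $A$; this changes neither the eigenvalues nor the hypotheses.

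\emph{Eigenvalues.} Consider the lower central series $C^1\n=\n$, $C^{j+1}\n=[\n,C^j\n]$. Each $C^j\n$ is $A$-invariant because $A$ is a derivation, and it vanishes for large $j$. The spectrum of $A$ on $\n$ is therefore the union of its spectra on the successive quotients $C^j\n/C^{j+1}\n$. The bracket induces a surjective linear map
\[
\pi_j:\ Q\otimes C^{j-1}\n/C^j\n\longrightarrow C^j\n/C^{j+1}\n .
\]
It is well defined because $[\dalg,C^{j-1}\n]\subset C^{j+1}\n$ and $[\n,C^j\n]= C^{j+1}\n$. The derivation identity shows that $\pi_j$ intertwines $A_Q\otimes 1+1\otimes A$ with $A$. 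The eigenvalues of $A_Q\otimes1+1\otimes A$ are the sums $\lambda+\mu$, with $\lambda$ an eigenvalue on $Q$ and $\mu$ an eigenvalue on $C^{j-1}\n/C^j\n$. An eigenvalue of $A$ on a quotient of a space is an eigenvalue on that space. Induction on $j$, starting from $C^1\n/C^2\n=Q$, then shows that every eigenvalue on the $j$-th layer is a sum of $j$ eigenvalues of $A_Q$.

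\emph{Generation.} Suppose $\mathfrak c+\dalg=\n$. I will show by downward induction that $\mathfrak c+C^{j}\n=\n$ for all $j\ge2$; since $C^j\n=0$ eventually, this gives $\mathfrak c=\n$. The case $j=2$ is the hypothesis. Assume $\mathfrak c+C^j\n=\n$. Every element of $C^j\n$ is a sum of brackets $[x,y]$ with $x\in\n$ and $y\in C^{j-1}\n$. Write $x=c+d$ with $c\in\mathfrak c$ and $d\in C^j\n$. By the previous step, write $y=c'+d'$ with $c'\in\mathfrak c$ and $d'\in C^j\n$. One could also try to push $y$ into $\mathfrak c\cap C^{j-1}\n$ modulo $C^j\n$. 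A cleaner route is to argue on the layers: the iterated brackets of elements of $\mathfrak c$ map onto $C^j\n/C^{j+1}\n$, because $\mathfrak c$ surjects onto $Q$ and $\pi_j$ is surjective. Since $\mathfrak c$ is a subalgebra, these iterated brackets lie in $\mathfrak c$. Hence $C^j\n\subset \mathfrak c+C^{j+1}\n$, and therefore $\mathfrak c+C^{j+1}\n=\n$.

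The only delicate point is the well-definedness and equivariance of $\pi_j$, which in turn uses $[\dalg,C^{j-1}\n]\subset C^{j+1}\n$. This follows from $[C^a\n,C^b\n]\subset C^{a+b}\n$, proved by the standard induction on $a$ via the Jacobi identity. Once that is in place, both assertions follow directly.
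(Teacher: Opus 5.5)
Your proposal is correct and follows essentially the same route as the paper: the $A$-invariant lower central filtration, an equivariant bracket surjection onto the layers $C^j\n/C^{j+1}\n$ (you build the paper's map $Q^{\otimes j}\to C^j\n/C^{j+1}\n$ recursively via $Q\otimes C^{j-1}\n/C^j\n$), the eigenvalues of the tensor-sum action being sums, and, for generation, lifting $Q$ into $\mathfrak c$ and climbing the finite filtration. The half-finished first attempt in your generation paragraph can be deleted; it would also have worked, since $[c+d,c'+d']\in\mathfrak c+C^{j+1}\n$ whenever $c,c'\in\mathfrak c$ and $d,d'\in C^j\n$.
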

\begin{proof}
Write $C^1\n=\n$ and $C^{j+1}\n=[\n,C^j\n]$. Iterated brackets give an equivariant surjection $Q^{\otimes j}\to C^j\n/C^{j+1}\n$. Changing a lift by an element of $C^2\n$ changes its bracket by an element of $C^{j+1}\n$. After complexification, triangularize the single operator on $Q$; the diagonal entries of its tensor-sum action are sums of its eigenvalues. Taking quotients and then the invariant lower-central filtration proves the spectrum assertion, with no semisimplicity assumption. For the second assertion, lift a basis of $Q$ inside $\mathfrak c$. The same bracket surjections and induction along the finite lower-central filtration show that these lifts generate $\n$.
\end{proof}

For any subspace $B\ne0$ of a nilpotent algebra,
\begin{equation}\label{eq:proper-bracket-intersection}
B\not\subset[B,\n].
\end{equation}
Indeed, such an inclusion would imply $B\subset C^j\n$ for every $j$, by induction. Here and below brackets between subspaces denote their linear span.

\begin{lemma}\label{lem:inner-skew-kernel}
Let $e\in E$ and suppose $D=\ad(e)$ is skew-adjoint on all of $\n$. If $E\cap\operatorname{im}D=0$, then $e\in\z(\n)$. Moreover, for every nonzero $e\in\n$, one has $e\notin\operatorname{im}\ad(e)$.
\end{lemma}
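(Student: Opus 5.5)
The plan is to translate the hypothesis $E\cap\operatorname{im}D=0$ into a statement about subspaces, using skew-adjointness and orthogonal complements. Then the generation criterion of Lemma~\ref{lem:spectrum-generation} shows that the centralizer of $e$ is everything. The second assertion should follow directly from~\eqref{eq:proper-bracket-intersection}.

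\emph{Step 1: locate $\operatorname{im}D$.} Since $D=\ad(e)$ maps $\n$ into $[\n,\n]$, we have $\operatorname{im}D\subset\dalg$. The metric on $\n$ is nondegenerate and $D$ is skew-adjoint on all of $\n$, so the standard identity
\[
\operatorname{im}D=(\ker D)^\perp
\]
holds: $\<Dx,y\>=-\<x,Dy\>$ gives $\operatorname{im}D\subset(\ker D)^\perp$, and the dimensions agree.

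\emph{Step 2: rewrite the hypothesis.} Because $\operatorname{im}D\subset\dalg$ and $E=\dalg\cap\dalg^\perp$, we get
\[
E\cap\operatorname{im}D=\dalg^\perp\cap\operatorname{im}D=\dalg^\perp\cap(\ker D)^\perp=(\dalg+\ker D)^\perp .
\]
So the hypothesis $E\cap\operatorname{im}D=0$ is equivalent, by nondegeneracy, to
\[
\ker D+\dalg=\n .
\]

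\emph{Step 3: conclude centrality.} The kernel $\ker D$ is the centralizer of $e$ in $\n$, which is a Lie subalgebra. The second assertion of Lemma~\ref{lem:spectrum-generation} states that a subalgebra $\mathfrak c$ with $\mathfrak c+\dalg=\n$ equals $\n$. Applying it with $\mathfrak c=\ker D$ gives $\ker D=\n$, that is, $e\in\z(\n)$.

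\emph{Step 4: the last assertion.} Let $e\ne0$ and suppose $e\in\operatorname{im}\ad(e)$. Put $B=\mathbb Re$. Then $[B,\n]$ is the span of $[e,\n]=\operatorname{im}\ad(e)$, so $B\subset[B,\n]$. This contradicts~\eqref{eq:proper-bracket-intersection}.

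The only step needing care is Step 2. There one must use $\operatorname{im}D\subset\dalg$ to replace $E$ by $\dalg^\perp$ inside the intersection, and then invoke nondegeneracy of the ambient metric to pass from the vanishing of $(\dalg+\ker D)^\perp$ to $\dalg+\ker D=\n$. The rest is formal.
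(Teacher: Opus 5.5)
Your proposal is correct and follows the paper's proof for the centrality assertion step for step: skew-adjointness gives $\operatorname{im}D=(\ker D)^\perp$, the hypothesis becomes $\ker D+\dalg=\n$, and Lemma~\ref{lem:spectrum-generation} applied to the subalgebra $\ker D$ finishes. For the final assertion you apply \eqref{eq:proper-bracket-intersection} with $B=\mathbb Re$, whereas the paper notes that $[e,x]=e$ would make $e$ an eigenvector of $\ad(x)$ with eigenvalue $-1$, contradicting nilpotency; both are valid one-line arguments.
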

\begin{proof}
For a skew operator, $(\ker D)^\perp=\operatorname{im}D$. Since $\operatorname{im}D\subset\dalg$,
\[
(\ker D+\dalg)^\perp=\operatorname{im}D\cap\dalg^\perp
=\operatorname{im}D\cap E=0.
\]
Thus $\ker D+\dalg=\n$. The kernel of a derivation is a subalgebra, so Lemma~\ref{lem:spectrum-generation} gives $\ker D=\n$. Finally, $[e,x]=e$ would give $\ad(x)e=-e$, contrary to nilpotency.
\end{proof}

\begin{lemma}\label{lem:fixed-small-radical}
If $1\leq\dim E\leq2$ and every $A\in\h$ fixes $E$ pointwise, then $E$ contains a nonzero central vector.
\end{lemma}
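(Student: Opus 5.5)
Take any nonzero $e\in E$ and set $D=\ad(e)$. The plan is to show that $D$ is skew-adjoint on all of $\n$ and that $E\cap\operatorname{im}D=0$ for a suitable $e$. Lemma~\ref{lem:inner-skew-kernel} then gives $e\in\z(\n)$.

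\emph{Skew-adjointness.} Apply the Geodesic Lemma with $T\in\n$ arbitrary (first non-null) and $T'=e$. The isotropy term is $\<[A,e],T\>$. By hypothesis $[A,e]=0$, so this term vanishes. The remaining identity reads
\[
\<[T,e],T\>=k(T)\<T,e\>,
\]
and $k(T)=0$ for non-null $T$. Hence $\<[e,T],T\>=0$ on an open dense set, and by continuity for all $T$. Polarizing gives that $D$ is skew-adjoint on $\n$. This step uses only that $\h$ fixes $e$, and it works for every $e\in E$.

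\emph{Transversality.} We have $\operatorname{im}D\subset\dalg$. For skew $D$, $\<D x,e'\>=-\<x,De'\>$, and $D$ is nilpotent. If $\dim E=1$, then $E\cap\operatorname{im}D\ne0$ would mean $e\in\operatorname{im}\ad(e)$, which the last clause of Lemma~\ref{lem:inner-skew-kernel} forbids. So $e$ is central and we are done.

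If $\dim E=2$, pick a basis $e_1,e_2$ of $E$ and suppose no nonzero element of $E$ satisfies the transversality condition. Then for each $e\in E\setminus0$ the intersection $E\cap\operatorname{im}\ad(e)$ is a nonzero line different from $\mathbb Re$. This defines a map from the projective line of $E$ to itself without fixed points.

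To rule this out, use that all $\ad(e)$, $e\in E$, are skew. Skewness gives $\<[e,x],e''\>=-\<x,[e,e'']\>$, where $[e,e'']$ lies in $\dalg$. Combining with~\eqref{eq:proper-bracket-intersection} applied to $B=E$, we get $E\not\subset[E,\n]$. So there is a proper subspace of $E$ containing $\sum_e E\cap\operatorname{im}\ad(e)$; call it the line $L$. Then for $e$ spanning $L$, the line $E\cap\operatorname{im}\ad(e)$ is either zero or equal to $L=\mathbb Re$. The latter is excluded by Lemma~\ref{lem:inner-skew-kernel}. So $E\cap\operatorname{im}\ad(e)=0$ and $e$ is central.

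\emph{Main obstacle.} The delicate point is $\dim E=2$. There one needs that the span of $E\cap[E,\n]$ is proper. The direct route is~\eqref{eq:proper-bracket-intersection}: $E\cap[E,\n]$ is contained in $[E,\n]$ and hence cannot be all of $E$. One must take care that a sum of lines $E\cap\operatorname{im}\ad(e)$ lies in $E\cap[E,\n]$, which holds because $\operatorname{im}\ad(e)\subset[E,\n]$. So that span is a line or zero, and choosing $e$ in it (or any $e$ if it is zero) finishes the argument.
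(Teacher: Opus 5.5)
Your proposal is correct and follows essentially the paper's proof: you get global skew-adjointness of $\ad(e)$ from the geodesic lemma with $T'=e$, then take $e$ in the proper subspace $E\cap[E,\n]$ (or any $e$ if that subspace is zero) so that Lemma~\ref{lem:inner-skew-kernel} applies. The fixed-point-free projective-map setup and the remark about $\<[e,x],e''\>$ in the $\dim E=2$ case are never used and can simply be deleted.
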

\begin{proof}
For $e\in E$, use $T'=e$ in~\eqref{eq:golemma} for non-null $T$. Since $Ae=0$ and $k(T)=0$, one obtains $\<{[e,T]}, T\>=0$. This scalar quadratic identity extends to all $T$, and polarization makes $D_e=\ad(e)$ globally skew-adjoint.

Put $J=E\cap[E,\n]$. By~\eqref{eq:proper-bracket-intersection}, $J$ is proper in $E$. If $J=0$, choose any nonzero $e\in E$. If $J$ is a line, choose a nonzero $e\in J$. In either case $E\cap\operatorname{im}D_e=0$: in the second case this intersection lies in $J$, but cannot contain $e$ by Lemma~\ref{lem:inner-skew-kernel}. Apply that lemma. The resulting line is $\h$-invariant because $\h$ fixes all of $E$.
\end{proof}

Then we prove the following.
\begin{theorem}\label{thm:radical-line-complete}
Suppose $E=\mathbb Re$ and $\dalg^\perp/E$ is definite. Then $E\subset\z(\n)$. There is no restriction on the signature of $\dalg/E$. In particular, the index-three configuration $(\dim E,\ind(\dalg/E))=(1,2)$ admits reduction by the central null line $E$ to a GO metric nilpotent algebra of index two.
\end{theorem}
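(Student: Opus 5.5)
We must show that $e$ is central, where $E=\mathbb Re$ and $\dalg^\perp/E$ is definite. The plan is to reduce to Lemma~\ref{lem:fixed-small-radical}: it suffices to prove that every $A\in\h$ fixes $e$. Since $E$ is an $\h$-invariant null line, $[A,e]=a(A)e$ for a linear functional $a:\h\to\br$.

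To see $a=0$, we use the definiteness of $\dalg^\perp/E$. Here $\vg=\dalg^\perp$ contains $e$, and its only null direction is $\mathbb Re$. Pick a complement $\vg=\mathbb Re\oplus\vg_1$ with $\vg_1$ definite. Choose a null vector $f\in\n$ with $\<e,f\>=1$ and $f\perp\vg_1$; it lies outside $\vg$ and can be taken to pair only with $e$ among the elements of $\vg$. Skew-adjointness gives $\<[A,f],e\>=-\<f,[A,e]\>=-a(A)$. The induced action of $A$ on $\vg/E$ is skew for a definite form. Hence, after complexification, its eigenvalues are purely imaginary.

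The key step is to exploit the geodesic lemma for a null vector $T=f+\dots$, or more simply to argue from nilpotency of the whole representation. The map $\h\to\mathfrak{gl}(E)$, $A\mapsto a(A)$, is a one-dimensional representation; if $a\neq 0$, $\h$ acts with a nonzero real weight on $E$.

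By Lemma~\ref{lem:spectrum-generation}, every eigenvalue of the derivation $A$ on $\n$ is a sum of eigenvalues on $Q=\n/\dalg$. Now $Q$ pairs with $E$, since $E=\dalg\cap\dalg^\perp$ and $\dalg^\perp\supset E$. Combined with skewness, the real parts of the eigenvalues on $\n/\dalg^\perp$ equal those of $-A$ on $\dalg$, and vice versa. The weight $a(A)$ on $e\in\dalg$ must therefore appear as a sum of weights on $Q$. Meanwhile, $Q$ contains $\vg/(\vg\cap\dalg)=\vg/E$, which has purely imaginary weights, together with a part dual to $\dalg/E$.

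I would track real parts: let $\rho$ be the multiset of real parts on $Q$. The quotient $Q$ is $(\vg/E)$ plus a subspace dual to $\dalg/E$, and the real parts on $\dalg$ consist of $a$ together with the real parts on $\dalg/E$, which are sums of elements of $\rho$. A maximal-real-part argument, taking the largest real part among the weights on $\dalg/E$ versus its negative appearing in $Q$, should force all real parts to be zero, in particular $a=0$. This weight-bookkeeping step is the main obstacle; it is where the assumption that $\vg/E$ is definite, and not merely nondegenerate, is essential. If it fails, the fallback is to run~\eqref{eq:golemma} with $T=f+Y$, $Y\in\vg_1$, and $T'=e$, which extracts $a(A(T))$ directly.

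With $a=0$, $\h$ fixes $E$ pointwise, and Lemma~\ref{lem:fixed-small-radical} (with $\dim E=1$) yields a nonzero central vector in $E$, hence $E\subset\z(\n)$. Then $F=E$ satisfies $[F,F^\perp]=0$, and Proposition~\ref{prop:quotient} gives the GO quotient $e^\perp/\mathbb Re$ of index reduced by one. This is index two in the configuration $(1,2)$, and nothing in the argument used the signature of $\dalg/E$.
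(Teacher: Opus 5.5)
Your overall route is the paper's: show $a(A)=0$ for every $A\in\h$ using Lemma~\ref{lem:spectrum-generation}, then apply Lemma~\ref{lem:fixed-small-radical} and Proposition~\ref{prop:quotient}. However, the step you flag as the main obstacle is the whole content of the proof, and you set it up with the wrong duality.

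You describe $Q=\n/\dalg$ as $\vg/E$ plus a part dual to $\dalg/E$. A dimension count rules this out, since $\dim Q=\dim\vg$. The duality you need is $\n/\dalg\leftrightarrow\dalg^\perp$, which holds because $\dalg^{\perp\perp}=\dalg$. The pairing $\n/\dalg^\perp\leftrightarrow\dalg$ that you invoke is not the relevant one. So $Q$ is the negative dual of $\vg$, and it decomposes as follows:
\begin{itemize}
\item it contains the invariant subspace $(\vg+\dalg)/\dalg\cong\vg/E$, whose eigenvalues are purely imaginary because $\vg/E$ is definite;
\item its one-dimensional quotient $\n/(\vg+\dalg)=\n/E^\perp$ is dual to $E$, and $A$ acts on it by $-a(A)$.
\end{itemize}
Hence every eigenvalue of $A$ on $Q$ has real part $0$ or $-a(A)$. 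By Lemma~\ref{lem:spectrum-generation}, every eigenvalue on $\n$ then has real part $-m\,a(A)$ for some integer $m\geq0$. Since $a(A)$ is itself an eigenvalue (on $e$), this forces $(1+m)a(A)=0$, so $a(A)=0$. No maximal-real-part argument over $\dalg/E$ is needed. The weights on $\dalg/E$ never enter, which is exactly why the signature of $\dalg/E$ is unrestricted.

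Your fallback does not close the gap. Taking $T'=e$ in~\eqref{eq:golemma} with $T$ non-null yields only
\[
a(A(T))\,\<e,T\>=-\<[T,e],T\>
\]
for the particular $A=A(T)$ supplied by the geodesic lemma. Lemma~\ref{lem:fixed-small-radical} needs $a$ to vanish on all of $\h$, which this does not give.

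For the index-three claim, you should also say why the hypothesis holds there. The index splits as $\dim E+\ind(\dalg/E)+\ind(\vg/E)$, so the configuration $(1,2)$ in total index three forces $\vg/E$ to be positive definite (possibly zero).
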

\begin{proof}
Fix $A\in\h$ and write $Ae=\alpha e$. The nondegenerate pairing between $\n/\dalg$ and $\dalg^\perp$ identifies their induced actions as negative duals. On $\dalg^\perp$, the invariant line $E$ has eigenvalue $\alpha$, and the quotient $\dalg^\perp/E$ has only purely imaginary eigenvalues, because its metric is definite. Thus all eigenvalues on $\n/\dalg$ have real parts $0$ or $-\alpha$.

If $\alpha>0$, Lemma~\ref{lem:spectrum-generation} makes every real part on $\n$ nonpositive, contradicting the eigenvalue $\alpha$ on $E$. If $\alpha<0$, reverse the inequalities. Hence $\alpha=0$ for every $A$. Lemma~\ref{lem:fixed-small-radical} now makes $e$ central. In the specified index-three case the index budget forces $\dalg^\perp/E$ to be positive definite (possibly zero-dimensional). Proposition~\ref{prop:quotient} supplies the asserted GO quotient.
\end{proof}

\subsection{$\ip|_{\dalg}$ has degeneracy $2$ and index $1$. } The last case to consider is the one when $\ip|_{\dalg}$ has degeneracy $2$ and index $1$. This is the most involved case. As above,  we have $(r, s)=(2, 1)$.  

\begin{lemma}\label{lem:radical-action}
For every $Y\in\mathfrak v=\dalg^\perp$, $\ad(Y)$ is skew-adjoint on the possibly degenerate metric space $\dalg$ and preserves $E$. If $\dim E=2$, then $E$ is abelian, and
\[
\rho:E\longrightarrow\so(W),\qquad
\rho(e)(X+E)=[e,X]+E,\qquad W=\dalg/E,
\]
is a well-defined $\h$-equivariant commuting family of nilpotent skew operators.
\end{lemma}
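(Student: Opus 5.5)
The plan is to reuse the computation from the proof of Proposition~\ref{thm:semidefinite}, which uses only the invariance of $E$ and $E^\perp$ and not semidefiniteness. Fix $Y\in\mathfrak v$, $e\in E$, $X\in\dalg$, and put $T=X+Y$, taken non-null at first. Apply~\eqref{eq:golemma} with test vector $T'=e$.

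The isotropy term $\<[A,e],T\>$ vanishes because $[A,e]\in E\subset\dalg\cap\mathfrak v$ and $E\perp\n$ on $\dalg+\mathfrak v$; here I will check that $T\in E^\perp$. The $k(T)$ term vanishes since $\<T,e\>=0$. Since $[e,T]\in\dalg\perp Y$, this leaves $\<[e,X+Y],X\>=0$. Setting $Y=0$ and subtracting gives two identities: $\<[e,X],X\>=0$, and $\<[e,Y],X\>=0$ for all $X\in\dalg$.

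The second identity says $[Y,e]\in\dalg\cap\dalg^\perp=E$, so $\ad(Y)$ preserves $E$. For skew-adjointness of $\ad(Y)$ on $\dalg$, I will argue directly with $T=X+tY$ and $T'=X'\in\dalg$, following Proposition~\ref{prop:universal}. The term $\<[A,X'],T\>$ equals $-\<X',[A,T]\>$, and I need to control its $X$-part. The cleaner route is to take $T'=Y$ and use that $[A,Y]$ stays in $\mathfrak v$ while $[A,X]\in\dalg$. Then $\<[X+Y,Y],X+Y\>=\<[X,Y],X\>$, because $[\n,\n]\perp Y$. The remaining terms are $\<[A,Y],X+Y\>=\<[A,Y],Y\>=0$, together with $k\<T,Y\>=k\<Y,Y\>$.

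Here is the main obstacle. When $T$ is null, $k$ may be nonzero. So I restrict to non-null $T$, which gives $k=0$. I then extend $\<[Y,X],X\>=0$ by density in $(X,Y)$ and polarize in $X$. This density works as long as $\<\cdot,\cdot\>$ on $\dalg+\mathfrak v$ is not identically zero; when it is zero, all terms vanish trivially.

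Next, for $\dim E=2$, I show that $E$ is abelian. We have $[E,E]\subset[\n,\n]\cap\ldots$. Since $E\subset\mathfrak v$, the fact that $\ad(Y)$ preserves $E$ applied to $Y\in E$ gives $[E,E]\subset E$. So $\ad(e)$ restricts to a nilpotent operator on the $2$-dimensional space $E$. By~\eqref{eq:proper-bracket-intersection}, $[E,E]$ is at most a line $J$. If $J\neq0$, pick $e\in J$ and $e'$ with $[e',e'']$ spanning $J$. Nilpotency of $\ad(e')$ on $E$ then forces $[e',J]=0$, so $[e,e']=0$ and $[E,E]=0$. This is the standard fact that a $2$-dimensional nilpotent Lie algebra is abelian.

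For $\rho$: since $\ad(e)$ preserves $\dalg$ (because $e\in\mathfrak v$ and by the skew step) and preserves $E$, it descends to $W$. Skew-adjointness on $\dalg$ descends to the nondegenerate quotient form. Nilpotency is inherited from $\ad(e)$. The operators commute because $\rho$ is a restriction of $\ad$, so $[\rho(e),\rho(e')]=\rho([e,e'])=0$. $\h$-equivariance follows from the derivation identity together with the $\h$-invariance of $\dalg$ and $E$.
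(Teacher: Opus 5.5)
Your argument is correct and is essentially the paper's proof. You feed test vectors from $\dalg$ and $\mathfrak v$ into the geodesic lemma, kill the isotropy term using $\h$-invariance of $\mathfrak v$ and $\dalg\perp\mathfrak v$, and polarize. You then use that $E\subset\mathfrak v$ is a two-dimensional nilpotent subalgebra, and Jacobi and invariance give the properties of $\rho$.

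The only difference is one simplification you missed. The paper takes $T=X\in\dalg$ itself with $T'=Y$, so the scalar term is $k\<X,Y\>=0$ even for null $X$. Your ``main obstacle'' and the density argument then disappear. Also, $[\mathfrak v,E]\subset E$ follows at once because a skew operator on $\dalg$ preserves its radical, so your separate $T'=e$ computation is valid but redundant.
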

\begin{proof}
Take $T=X\in\dalg$ and $T'=Y\in\mathfrak v$ in~\eqref{eq:golemma}. Both $\<{AY}, X\>$ and $k\< X, Y\>$ vanish by invariance and orthogonality, including when $X$ is null. Hence $\<{[Y,X]}, X\>=0$ for every $X$, and polarization gives skew-adjointness on $\dalg$. Its radical $E$ is consequently preserved. Since $E\subset\mathfrak v$, this proves that $E$ is a subalgebra. A two-dimensional nilpotent Lie algebra is abelian. Jacobi, invariance and nilpotency now give all the assertions about $\rho$; no Lie bracket on $W$ is being assumed.
\end{proof}

Then we have 
\begin{proposition}\label{prop:rho-rank-one}
Suppose $\dim E=2$ and $\rank\rho=1$. Then the $\h$-invariant line $F=\ker\rho$ satisfies $[F,F^\perp]=0$. This assertion does not require a signature restriction on $W$.
\end{proposition}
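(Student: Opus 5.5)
The plan is to reduce everything to Theorem~\ref{thm:descent}, or to argue directly along the same lines. By Lemma~\ref{lem:radical-action}, $\rho:E\to\so(W)$ is linear and $\h$-equivariant, where $\h$ acts on $\so(W)$ by conjugation. Hence $F=\ker\rho$ is $\h$-invariant. Since $\rank\rho=1$ and $\dim E=2$, $F$ is a line, say $F=\mathbb Re$.

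We must show $[e,F^\perp]=0$, and we split $F^\perp=e^\perp$ into pieces.

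\emph{Bracket with $\dalg$.} Since $\rho(e)=0$, we have $[e,\dalg]\subset E$.

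\emph{Bracket with $\vg=\dalg^\perp$.} Apply the GO identity~\eqref{eq:golemma} with $T'=e$ and $T\in e^\perp$. Because $F$ is invariant and $\<T,e\>=0$, both the isotropy term and the $k(T)$ term vanish. This gives $\<[e,T],T\>=0$ for all $T\in e^\perp$. Polarizing yields $\<[e,T],U\>=-\<T,[e,U]\>$ for $T,U\in e^\perp$.
\begin{itemize}
\item Take $U\in\dalg$. Then $[e,U]\in E$, so $\<T,[e,U]\>=0$ for $T\in\vg$ (as $\vg\perp E\subset\dalg$). Hence $[e,\vg]\subset\dalg^{\perp}\cap e^{\perp\perp}$, and in fact $[e,\vg]\subset \dalg\cap\dalg^\perp=E$, since brackets land in $\dalg$.
\item Now $[e,\vg]\subset E$ and $[e,\dalg]\subset E$, so $\ad(e)$ maps $e^\perp=\dalg+\vg+(\text{complement})$ into $E$. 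More precisely, $e^\perp=E^\perp+\Span$ of a vector pairing with the other radical vector, and $[e,e^\perp]\subset\dalg$.
\end{itemize}
Using polarization again with $U$ ranging over $e^\perp$, we get $\<[e,T],e^\perp\>=0$ whenever $[e,U]\in E$ is orthogonal to $T$. So $[e,T]\in (e^\perp)^\perp\cap\dalg=\mathbb Re$. Then nilpotency of $\ad(T)$ on the invariant line $\mathbb Re$ forces $[e,T]=0$, as in Corollary~\ref{cor:radline}.

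The main obstacle is the step showing $[e,e^\perp]\subset\mathbb Re$, rather than merely $\subset E$. For this, pick $f\in E$ spanning a complement to $F$ in $E$, and let $g\in\n$ with $\<g,f\>=1$ and $\<g,e\>=0$, so that $g\in e^\perp$. We must control $[e,g]$.

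First, note that $E\subset e^\perp$, and since $E$ is abelian, $[e,E]=0$. The plan is to show that $[e,T]$ is orthogonal to $g$ for every $T\in e^\perp$. Polarization gives $\<[e,T],g\>=-\<T,[e,g]\>$, so it suffices to show $[e,g]\in E$ with no $f$-component beyond what pairs trivially.

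To handle the $f$-component of $[e,g]$, write $[e,g]=\alpha e+\beta f$. Set $T=U=g$ in the skew identity: $\<[e,g],g\>=\beta=0$. Thus $[e,g]\in\mathbb Re$. For general $T\in e^\perp$, the component of $[e,T]$ along $f$ is $\<[e,T],g\>=-\<T,[e,g]\>=-\alpha\<T,e\>=0$. Hence $[e,e^\perp]\subset\mathbb Re$, and the nilpotency argument finishes the proof.

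Everything here is signature-free, since only the line invariance, polarization, and nilpotency are used, with no positivity. I expect the delicate bookkeeping to be verifying the first piece, $[e,\vg]\subset E$, carefully, and confirming that every test vector used lies in $e^\perp$.
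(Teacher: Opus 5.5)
\textbf{There is a genuine gap}, at the step you yourself flag as the main obstacle.

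\emph{The circular step.} When you write $[e,g]=\alpha e+\beta f$ you are assuming $[e,g]\in E$. Nothing you have proved gives this: a priori $[e,g]$ lies only in $\dalg$. Worse, your own polarization identity shows the assumption is equivalent to the conclusion. For $U\in E^\perp\subset e^\perp$ you have $[e,U]\in E$, and
\[
\<[e,g],U\>=-\<g,[e,U]\>,
\]
which is minus the $f$-component of $[e,U]$. So $[e,g]\in E=(E^\perp)^\perp$ holds exactly when $[e,E^\perp]\subset\mathbb Re$, and that is what you are trying to derive. The earlier sentence ``we get $\<[e,T],e^\perp\>=0$ whenever \dots'' has the same defect.

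A telling symptom is that you never use $\rho(f)\neq0$: the hypothesis $\rank\rho=1$ enters only to make $F$ a line. The GO skewness of $\ad(e)$ on the degenerate space $e^\perp$ merely ties the $f$-components of $[e,E^\perp]$ to the component of $[e,g]$ outside $E$. It does not force either to vanish.

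\emph{The missing input is structural, not metric.}
\begin{enumerate}
\item For $Y\in\vg$ you have $[Y,e]\in E$, and by Jacobi $\rho([Y,e])=[\phi(Y),\rho(e)]=0$. Hence $[\vg,e]\subset\ker\rho=F$.
\item For $a\in\dalg$ write $[e,a]=\lambda(a)e+\beta(a)f$. The Jacobi identity for $e,a,X$, reduced modulo $E$, gives $\beta(a)NX=\beta(X)Na$ with $N=\rho(f)\neq0$.
\item If $\beta\neq0$, step 2 forces $\rank N\le 1$. That is impossible, since a nonzero skew operator on the nondegenerate space $W=\dalg/E$ has even rank.
\item Hence $\beta=0$, so $[F,E^\perp]\subset F$. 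Nilpotency on the line $F$ then gives $[F,E^\perp]=0$.
\end{enumerate}

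Only now does polarization against $E^\perp$ legitimately give $[e,T]\in E$ for all $T\in e^\perp$, and in particular $[e,g]\in E$. From there your own trick is correct. Writing $[e,g]=\alpha e+\beta' f$, the identity $\<[e,g],g\>=0$ gives $\beta'=0$, and nilpotency of $\ad(g)$ on $\mathbb Re$ gives $\alpha=0$. This is a valid alternative to the paper's final Jacobi argument showing $\rho([e,T])=0$. Since $e^\perp=E^\perp\oplus\mathbb Rg$, this completes the proof.
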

\begin{proof}
Choose $0\ne e\in F$ and $f\in E\setminus F$, and put $N=\rho(f)\ne0$. For $Y\in\mathfrak v$, the induced operator $\phi(Y)$ on $W$ satisfies
\[
\rho([Y,e])=[\phi(Y),\rho(e)]=0.
\]
Thus $[\mathfrak v,F]\subset F$, and nilpotency on this line gives $[\mathfrak v,F]=0$.

Write $[e,a]=\lambda(a)e+\beta(a)f$ for $a\in\dalg$. Both coefficients descend to $W$ because $E$ is abelian. Jacobi applied to $e,a,X$, reduced modulo $E$, gives
\begin{equation}\label{eq:rank-one-Jacobi}
\beta(a)NX=\beta(X)Na,\qquad a,X\in W.
\end{equation}
Indeed $[e,[a,X]]\in E$, and the other two terms give the displayed identity. If $\beta\ne0$, it forces $\rank N\leq1$. But a nonzero skew-adjoint operator on a nondegenerate symmetric space has positive even rank, since its associated bilinear form is alternating. Hence $\beta=0$. Then $[F,\dalg]\subset F$, and nilpotency again gives $[F,\dalg]=0$. Therefore $[F,I]=0$ for $I=E^\perp=\dalg+\mathfrak v$.

For $T\in F^\perp$, use $T'=e$ in~\eqref{eq:golemma}; its isotropy and scalar terms both vanish. Polarizing on $F^\perp$ and pairing against $I$ gives $[e,T]\perp I$, so $[e,T]\in E$. For every $X\in\dalg$, Jacobi now yields
\[
[[e,T],X]=[e,[T,X]]-[T,[e,X]]=0,
\]
since $e$ centralizes $\dalg$. Thus $\rho([e,T])=0$, and $[e,T]\in F$. Nilpotency on the line $F$ finally gives $[e,T]=0$.
\end{proof}

Next, We give the full argument to exclude an injective Lorentz radical action.

\begin{lemma}\label{lem:lorentz-normalizer}
Let $W$ be Lorentzian, let $\dim E=2$, and let $\rho:E\to\so(W)$ be injective with commuting nilpotent image. There are null vectors $k,\ell$ and a positive definite space $U$ such that
\[
W=\mathbb Rk\oplus U\oplus\mathbb R\ell,\qquad
\<k, \ell\>=1,
\]
and, for a suitable basis $e_1,e_2$ of $E$ and orthonormal $u_1,u_2\in U$,
\begin{equation}\label{eq:plane-null-translations}
N_i=\rho(e_i),\quad N_i k=u_i,\quad
N_i x=-\<{u_i}, x\>\ell\qquad  x\in U,\quad N_i\ell=0.
\end{equation}
The normalizer of $\rho(E)$ in $\so(W)$ acts conformally on the positive definite parameter plane $\Span(u_1,u_2)$. Its conformal scalar equals its eigenvalue on $\mathbb R\ell$.
\end{lemma}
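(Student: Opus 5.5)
The plan is to reuse the Iwasawa normal form already exploited in Section~\ref{sec:lorentz-derived}. The image $\rho(E)$ is a two-dimensional abelian Lie algebra of nilpotent skew operators on the Lorentzian space $W$. By Engel's theorem together with the real-group conjugacy result of \cite{Mos} (as used after Proposition~\ref{prop:universal}), it can be conjugated into the nilpotent part of an Iwasawa decomposition of $\so(W)$.

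In Lorentz signature that nilpotent part is abelian and consists precisely of the null translations. Concretely, fix a Witt decomposition $W=\mathbb Rk\oplus U\oplus\mathbb R\ell$ with $k,\ell$ null, $\<k,\ell\>=1$ and $U=\Span(k,\ell)^\perp$ positive definite. The nilpotent part is then $\{N_u: u\in U\}$, where
\[
N_uk=u,\qquad N_ux=-\<u,x\>\ell\ (x\in U),\qquad N_u\ell=0.
\]
One checks skew-adjointness directly; this is the same shape as~\eqref{eq:L-normal}. The map $u\mapsto N_u$ is an injective linear map $U\to\so(W)$.

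Since $\rho$ is injective, $\rho(E)=\{N_u:u\in P\}$ for a two-dimensional subspace $P\subset U$. Choose an orthonormal basis $u_1,u_2$ of $P$ and set $e_i=\rho^{-1}(N_{u_i})$. This gives~\eqref{eq:plane-null-translations}. In particular $\dim U\ge2$.

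For the normalizer statement, let $S\in\so(W)$ normalize $\rho(E)$.

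\begin{itemize}
\item The common kernel of $\rho(E)$ is $\mathbb R\ell\oplus P^{\perp_U}$. The common image is $\mathbb R\ell$: each $\operatorname{im}N_i=\Span(u_i,\ell)$, so the intersection over the two operators is $\mathbb R\ell$. Since $S$ normalizes $\rho(E)$, it preserves both subspaces, so $S\ell=\lambda\ell$.
\item Skew-adjointness then gives $Sk\equiv-\lambda k$ modulo $\ell^\perp=\mathbb R\ell\oplus U$. Write $Sk=-\lambda k+w+\mu\ell$ with $w\in U$.
\item Compute the commutator on $k$:
\[
[S,N_u]k=SN_uk-N_uSk=Su+\lambda u-\<u,w\>\ell .
\]
Normalization says $[S,N_u]=N_{\sigma(u)}$ for a linear $\sigma:P\to P$, so this must equal $\sigma(u)$.
\item Comparing $U$-components gives $\sigma(u)=(S|_U)_U u+\lambda u$ on $P$. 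Here $(S|_U)_U$ is the $U$-component of $S$ on $U$, which is skew on the positive definite space $U$.
\item Since $\sigma$ preserves $P$, its compression to $P$ is skew plus $\lambda$ times the identity. So $\sigma$ is conformal on $P$, with conformal scalar $\lambda$, the eigenvalue of $S$ on $\mathbb R\ell$.
\end{itemize}

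The main obstacle is the bookkeeping in this last step. I have to confirm that the $U$-component of $Su$ lands in $P$ and that the skew part of $S$ on $U$, restricted and projected to $P$, is skew. Both follow from the identity $[S,N_u]=N_{\sigma(u)}$ being an equality of operators. Evaluating it on $x\in U$ as well gives consistency with skewness and shows there is no extra symmetric part.

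A secondary point is the conjugacy step itself. Any abelian nilpotent subalgebra of $\so(1,n)$ fixes a null line, namely the common kernel of the null translations, and hence lies in the stabilizer's nilradical. I would cite \cite{Mos} exactly as earlier in the paper rather than reprove it.
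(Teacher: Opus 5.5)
Your proof is correct, but for the normal form you take a different route from the paper.

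\textbf{Normal form.} You conjugate $\rho(E)$ into the abelian Iwasawa nilradical of $\so(W)$ via Engel and \cite{Mos}, as the paper itself does in Section~\ref{sec:lorentz-derived}. The paper's proof of this lemma instead argues from scratch:
\begin{itemize}
\item A nonzero nilpotent Lorentz-skew $N$ has $N^3=0$, since a Jordan chain of length at least four would end in a totally isotropic plane.
\item It also has $N^2\ne0$: a square-zero skew operator has totally isotropic image, hence rank at most one, hence is zero by evenness of skew rank.
\item So $L=\operatorname{im}N^2$ is a null line, and the commuting partner preserves $L$.
\item A nilpotent skew operator preserving a null line is trivial on $L$ and on the definite quotient $L^\perp/L$, hence is a null translation.
\end{itemize}
Your route is shorter and uniform with the earlier sections, but it rests entirely on the external conjugacy theorem. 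Your parenthetical justification that the subalgebra fixes a null line presupposes the null-translation form, so it does not replace the citation. The paper's route is elementary and produces the invariant null line intrinsically.

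\textbf{Normalizer.} This step is essentially the paper's. Your compression of $S$ to $U$ is its induced skew operator on $\ell^\perp/\mathbb R\ell\cong U$, and $\sigma$ is that operator restricted to $P$ plus $\lambda$. Two small repairs are needed there.
\begin{itemize}
\item A normalizing $S$ does not a priori preserve $\operatorname{im}N_1\cap\operatorname{im}N_2$ for two fixed elements. Derive $S\ell\in\mathbb R\ell$ instead from the invariance of $\rho(E)W=P\oplus\mathbb R\ell$, whose radical is $\mathbb R\ell$. Alternatively, use the radical of the common kernel, as the paper does.
\item The $\ell$-term of $[S,N_u]k$ is $+\<u,w\>\ell$, not $-\<u,w\>\ell$. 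This is harmless, since you only compare $U$-components.
\end{itemize}
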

\begin{proof}
A nonzero nilpotent Lorentz-skew operator $N$ has a three-step Jordan chain. Indeed, a chain of length at least four would have two linearly independent, mutually orthogonal null vectors at its end, impossible in Lorentz signature. A square-zero skew operator has totally isotropic image, hence rank at most one, and therefore is zero by evenness of skew rank. Thus $N^3=0$ and $\operatorname{im}N^2$ is nonzero and totally isotropic, so it is a null line $L$.

Every commuting operator preserves $L$. A nilpotent skew operator preserving $L$ acts trivially on $L$ and on the definite quotient $L^\perp/L$; it therefore has precisely the null-translation form in~\eqref{eq:plane-null-translations}, of rank two when nonzero. The translation parameters span a positive plane and can be orthonormalized by changing the basis of $E$.

The line $L$ is the radical of the common kernel, so every normalizing skew operator $K$ preserves it. Write $K\ell=\delta\ell$; its induced action on $U$ is a skew operator $B$. Directly from the Witt blocks,
\[
[K,N_u]=N_{Bu+\delta u}.
\]
On an invariant parameter plane this is a skew operator plus $\delta$ times the identity, as asserted.
\end{proof}

Then
\begin{proposition}\label{prop:rho-not-injective}
If $\dim E=2$, $W=\dalg/E$ is Lorentzian and $\mathfrak v/E$ is definite, then $\rank\rho<2$.
\end{proposition}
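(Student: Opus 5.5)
We argue by contradiction: assume $\rank\rho=2$, so $\rho$ is injective. Lemma~\ref{lem:lorentz-normalizer} then gives the normal form~\eqref{eq:plane-null-translations}: a Witt decomposition $W=\mathbb Rk\oplus U\oplus\mathbb R\ell$ and a basis $e_1,e_2$ of $E$ with $\rho(e_i)=N_i$. The line $L=\mathbb R\ell=\operatorname{im}N_1^2$ is canonical. Its preimage $\tilde L\subset\dalg$ is therefore $\h$-invariant, and it is also invariant under every $\phi(Y)$, $Y\in\mathfrak v$, because these operators commute with $\rho(E)$ modulo $\rho$ of brackets (by Lemma~\ref{lem:radical-action} and Jacobi).

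\emph{Step 1: the isotropy acts trivially on $E$.} Fix $A\in\h$ and let $\delta$ be its eigenvalue on $L$. Since $\rho$ is $\h$-equivariant, the induced action of $A$ on $E$ is conjugate to the action of $[A|_W,\cdot]$ on $\rho(E)$. By Lemma~\ref{lem:lorentz-normalizer} this action is $\delta\cdot\mathrm{id}$ plus a skew operator, so the eigenvalues of $A$ on $E$ have real part $\delta$. Now repeat the eigenvalue argument of Theorem~\ref{thm:radical-line-complete}. The pairing between $\n/\dalg$ and $\dalg^\perp=\mathfrak v$ makes the actions negative duals of each other. On $\mathfrak v$ the eigenvalues come from $E$ (real part $\delta$) and from the definite quotient $\mathfrak v/E$ (purely imaginary). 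Hence the eigenvalues on $\n/\dalg$ have real parts in $\{0,-\delta\}$. By Lemma~\ref{lem:spectrum-generation}, every eigenvalue on $\n$ is a sum of these. If $\delta\neq0$, all real parts on $\n$ have the sign of $-\delta$, contradicting the eigenvalue of real part $\delta$ on $E\subset\n$. So $\delta=0$, and $\h$ acts on $E$ by skew operators of the positive plane $\Span(u_1,u_2)$.

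\emph{Step 2: the kernel of the isotropy on $E$.} Let $\h_0=\{A\in\h:A|_E=0\}$. When $\h$ acts on $E$ with a nonzero rotation, Lemma~\ref{lem:fixed-small-radical} does not apply directly. We instead use the trick of Lemma~\ref{lem:scaling}: differences of GO solutions $A(T)$ for suitable rescalings of $T$ give elements of $\h_0$. Concretely, for $e\in E$ and $T$ with $T+te$ non-null for two values of $t$, subtracting the geodesic-lemma identities with $T'=e$ should yield the skew-adjointness $\<[e,T],T\>=0$ on a dense set, hence everywhere. With $D_e=\ad(e)$ globally skew, Lemma~\ref{lem:inner-skew-kernel} applies to a suitable $e$ as in the proof of Lemma~\ref{lem:fixed-small-radical}. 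This forces some $e\in E$ to be central, so $\rho(e)=0$, contradicting injectivity.

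\emph{Main obstacle.} The hardest step is Step 2: showing that the isotropy terms $\<[A,T'],T\>$ with $T'=e$ really drop out when $\h$ rotates $E$. If the differencing trick fails, the fallback is a direct contradiction in the spirit of the Lorentzian subsection. Pick $X\in\dalg$ lifting $k$, and apply~\eqref{eq:golemma} with $T=X+Y$ for $Y\in\mathfrak v$ and test vectors $T'=e_i$. The $\ell$-components of the brackets $[e_i,X]$ then carry the nonzero $N_ik=u_i$. After pairing with $\tilde L$, these components should be incompatible with the rotation $[A,e_i]$ because $E\perp\dalg$. Such a computation with the explicit blocks~\eqref{eq:plane-null-translations} would show $u_1=u_2=0$, again a contradiction.
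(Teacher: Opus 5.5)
Your Step~1 is the paper's spectral argument, and you correctly isolate the hard case, where $\h$ acts on $E$ by a nonzero rotation. The gap is Step~2: the differencing trick of Lemma~\ref{lem:scaling} has nothing to separate here.

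For $T_t=T+te$ one has:
\begin{itemize}
\item $[e,T_t]=[e,T]$;
\item $\<[e,T],e\>=0$, because $[e,T]\in\dalg$ and $e\in\rad\dalg$;
\item $\<[A_t,e],T_t\>=\<[A_t,e],T\>$, because $[A_t,e]\in E$ and $E$ is totally isotropic.
\end{itemize}
So every admissible $t$ yields the same identity $\<[e,T],T\>=\<[A_t,e],T\>$. Subtracting two of them only constrains $A_{t_1}-A_{t_2}$ and says nothing about $\<[e,T],T\>$.

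Concretely, write $A(T)|_E=\theta(T)\mathcal J$ with $\mathcal Je_1=e_2$, $p_i=\<e_i,T\>$ and $q_i=\<[e_i,T],T\>$. Then test vectors in $E$ give exactly
\[
q_1=\theta(T)\,p_2,\qquad q_2=-\theta(T)\,p_1 .
\]
Eliminating $\theta$ leaves only $p_1q_1+p_2q_2=0$. Hence $q_1=ap_2$ and $q_2=-ap_1$ for a linear form $a=\<b,\cdot\>$. Global skewness of $\ad(e_i)$ is equivalent to $b=0$. No rescaling or differencing of the geodesic lemma removes this rotation term.

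Your fallback cannot help either. For $T=X+Y\in\dalg+\mathfrak v=E^\perp$, both $\<T,e_i\>$ and $\<[A,e_i],T\>$ vanish. The identity then collapses to the already known skewness of $N_i$ on $W$, so it carries no information about $u_1,u_2$.

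The missing idea is to analyze the defect vector $b$ rather than assume it vanishes. The paper sets this up as follows. It takes null $P_1,P_2$ with $\<P_i,e_j\>=\delta_{ij}$, orthogonal to a complement $W_0\supset[E,\n]$ of $E$ in $\dalg$ and to a definite complement of $E$ in $\mathfrak v$. It gets $p_1q_1+p_2q_2=0$ in one stroke from the $T$-dependent test vector $p_1e_1+p_2e_2$, whose isotropy term vanishes since $A|_E$ is skew. From this it derives $[e_1,P_1]=[e_2,P_2]=0$ and $[e_1,P_2]=-[e_2,P_1]=b\in W_0$. The argument then runs:
\begin{enumerate}
\item $b$ and $\ell$ are $\h$-fixed, so $\ad(b)$ and $\ad(\ell)$ are globally skew.
\item $\ell$ is central.
\item The coefficient $\tau$ of the null vector $k$ in the $W_0$-component of $[P_1,P_2]$ vanishes, so $[P_i,b]\in\mathbb R\ell$, and hence $\ad(b)=0$.
\item Therefore $\ad(e_i)$ sends the generators $P_1,P_2,\mathfrak v$ into the central line $\mathbb Rb$, so it kills $\dalg$, contradicting $N_i\ne0$.
\end{enumerate}
Without an argument of this kind, your Step~2 is an unsupported assertion and the proof is incomplete.
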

\begin{proof}
Suppose $\rho$ is injective. Equip $E$ with the positive metric transported from the translation parameters in Lemma~\ref{lem:lorentz-normalizer}. For $A\in\h$, equivariance says that $A|E$ is conformal, with a scalar $\delta$. Its eigenvalues have real part $\delta$. The generator quotient $\n/\dalg$, dual to $\mathfrak v$, has eigenvalue real parts $-\delta$ or zero. Lemma~\ref{lem:spectrum-generation} therefore forces $\delta=0$, just as in Theorem~\ref{thm:radical-line-complete}. Thus $\h|E\subset\so(E)$, and the induced action of $\h$ on the null line $\mathbb R\ell\subset W$ is zero.

We also have $[\mathfrak v,E]=0$. Indeed $\phi(Y)=\ad(Y)|W$ for $Y\in\mathfrak v$ normalizes $\rho(E)$, so its action on $E$ is conformal for the same positive metric. That action is nilpotent. Its scalar is zero by trace, and its remaining skew part is zero by definiteness.

If $\h$ acts trivially on $E$, Lemma~\ref{lem:fixed-small-radical} gives a nonzero central vector in $E$, contradicting injectivity of $\rho$. We may therefore assume that its action contains a nonzero rotation and is irreducible on the real plane $E$.

Put $\mathcal S=[E,\n]$ and $J=E\cap\mathcal S$. These spaces are $\h$-invariant; $J$ is proper by~\eqref{eq:proper-bracket-intersection}. Irreducibility gives
\begin{equation}\label{eq:plane-disjoint-images}
E\cap[E,\n]=0.
\end{equation}
Choose a complement $W_0$ to $E$ in $\dalg$ containing $\mathcal S$. It is nondegenerate, and identifies isometrically with $W$. Choose a Witt decomposition
\[
\n=\Span(P_1,P_2)\oplus W_0\oplus V\oplus E,
\qquad \<{P_i}, {e_j}\>=\delta_{ij},
\]
where $V\oplus E=\mathfrak v$, both $P_i$ are null and orthogonal to $W_0\oplus V$, and $V$ is definite. These complements are not assumed $\h$-invariant. Let $D_i=\ad(e_i)$ on $\n$. Its entire image lies in $\mathcal S\subset W_0$, so its restriction to $W_0$ is exactly $N_i$; the vectors in~\eqref{eq:plane-null-translations} will now denote their lifts to $W_0$.

For $T\in\n$, put $p_i(T)=\<{e_i}, T\>$ and $q_i(T)=\<{D_iT}, T\>$. In~\eqref{eq:golemma}, take the test vector $p_1e_1+p_2e_2$. For non-null $T$, its isotropy pairing is zero because $A|E$ is skew for the auxiliary positive metric. Thus $p_1q_1+p_2q_2=0$. This scalar polynomial identity holds for all $T$. Since $p_1,p_2$ are independent linear forms, polynomial divisibility gives a linear form $a$ with
\begin{equation}\label{eq:plane-cubic-defect}
q_1=a p_2,\qquad q_2=-a p_1.
\end{equation}
Write $b=a^\sharp$ for its metric dual. Polarization yields
\begin{equation}\label{eq:plane-symmetric-defects}
D_1+D_1^*=e_2\mathbin{\odot}b,\qquad
D_2+D_2^*=-e_1\mathbin{\odot}b,
\end{equation}
where $(x\odot y)T=\< y, T\> x+\<x, T\> y$.

Every $D_i$ annihilates $\mathfrak v$, and $D_i^*\mathfrak v=0$ because $\operatorname{im}D_i\subset\dalg$. Equation~\eqref{eq:plane-symmetric-defects} gives $b\perp\mathfrak v$, so $b\in\dalg$. Furthermore $D_iP_j\in W_0\perp\Span(P_1,P_2)$. Evaluate~\eqref{eq:plane-cubic-defect} at $P_2$ for $i=1$ and at $P_1$ for $i=2$: $\< b, {P_1}\>=\<b, {P_2}\>=0$. Thus $b\in W_0$. Polarizing against $W_0$ now gives the exact identities
\begin{equation}\label{eq:plane-P-actions}
D_1P_1=0,\quad D_1P_2=b,\quad
D_2P_1=-b,\quad D_2P_2=0.
\end{equation}

The vector $b$ is fixed by $\h$. For a direct verification, let $\mathcal J e_1=e_2$, $\mathcal J e_2=-e_1$. The symmetric defect of $D_e$ is $(\mathcal J e)\odot b$. Commuting this identity with any skew derivation $A\in\h$, using $[A,D_e]=D_{Ae}$ and $A|E$ commuting with $\mathcal J$, gives $(\mathcal J e)\odot Ab=0$, hence $Ab=0$. Applying the geodesic lemma to the fixed test vector $b$, first on non-null $T$ and then polarizing, proves that $\ad(b)$ is globally skew. It consequently annihilates $\mathfrak v$, and in particular $[b,E]=0$.

The vector $\ell=-D_iu_i$ lies in $\mathcal S$. Its class in $W$ is fixed by $\h$, since the normalizer scalar was shown to be zero. Therefore $A\ell\in E\cap\mathcal S=0$: $\ell$ is fixed as an actual vector, without requiring an invariant complement. The same GO argument makes $\ad(\ell)$ globally skew, so $[\ell,\mathfrak v]=0$.

Now
\begin{equation}\label{eq:plane-b-ui}
[b,u_i]=[b,D_i k]=D_i[b,k]\in\Span(u_i,\ell).
\end{equation}
For each $j$ and each $i=1,2$, Jacobi, $\ell=-D_i u_i$, and~\eqref{eq:plane-P-actions} give
\[
[P_j,\ell]=-[[P_j,e_i],u_i]-D_i[P_j,u_i]
\in\Span(u_i,\ell).
\]
Here the first term is zero or a signed multiple of~\eqref{eq:plane-b-ui}; the second lies in the same plane because $[P_j,u_i]\in\dalg$. The intersection of the two planes is $\mathbb R\ell$. Nilpotency on this line yields $[P_j,\ell]=0$. Since $P_1,P_2,\mathfrak v$ together generate $\n$, we conclude that
\begin{equation}\label{eq:plane-ell-central}
\ell\in\z(\n).
\end{equation}

Set $c=[P_1,P_2]$ and write its $W_0$-component as $c_0=\tau k+x+\sigma\ell$, with $x\in U$. Jacobi and~\eqref{eq:plane-P-actions} give
\[
[P_1,b]=N_1c_0,\qquad [P_2,b]=N_2c_0.
\]
By~\eqref{eq:plane-b-ui}, $[b,u_i]\in W_0$, which is orthogonal to $P_j$. Skew-adjointness of $\ad(b)$ therefore gives
\[
0=\<{[b,u_i]}, {P_j}\>=-\<{u_i}, {[b,P_j]}\>
=\<{u_i}, {N_jc_0}\>=\tau\delta_{ij}.
\]
Hence $\tau=0$ and $[P_i,b]\in\mathbb R\ell$. Together with $[b,\mathfrak v]=0$ and~\eqref{eq:plane-ell-central}, this means that $\ad(b)$ sends every generator to a central vector. It annihilates $\dalg$, and its image is contained in $\mathbb R\ell$. Global skew-adjointness also puts its image in $\dalg^\perp$. Since $\mathbb R\ell\subset W_0$ and $W_0\cap\dalg^\perp=0$, we get $\ad(b)=0$.

Finally,~\eqref{eq:plane-P-actions} and $D_i\mathfrak v=0$ show that $D_i$ sends every generator into the central subspace $\mathbb Rb$ (which may be zero). Thus $D_i\dalg=0$, contradicting the nonzero operators $N_i$. This excludes rank two.
\end{proof}

According to Proposition~\ref{prop:quotient}, Theorem~\ref{thm:descent}, Lemma~\ref{lem:radical-action}, Proposition~\ref{prop:rho-rank-one} and Proposition~\ref{prop:rho-not-injective}, we have
\begin{theorem}\label{thm:radical-plane-complete}
Suppose $\dim E=2$, $\dalg/E$ is Lorentzian and $\dalg^\perp/E$ is definite. Then $\rank\rho\leq1$, and there is a nonzero $\h$-invariant $F\subset E$ with $[F,F^\perp]=0$. In total index three the quotient $F^\perp/F$ has index $3-\dim F\in\{1,2\}$ and is GO. If $\rank\rho=1$, one can take $F=\ker\rho$, of dimension one.
\end{theorem}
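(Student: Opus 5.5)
The statement assembles results already in place, so I would argue by cases on $\rank\rho$. By Lemma~\ref{lem:radical-action}, since $\dim E=2$, the radical $E$ is abelian and $\rho:E\to\so(W)$ is a well-defined, $\h$-equivariant map whose image is a commuting family of nilpotent skew operators on the Lorentzian space $W=\dalg/E$. Proposition~\ref{prop:rho-not-injective} applies verbatim: $W$ is Lorentzian and $\mathfrak v/E=\dalg^\perp/E$ is definite. It rules out $\rank\rho=2$, which gives $\rank\rho\le1$.

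If $\rank\rho=1$, the line $F=\ker\rho$ is $\h$-invariant, by equivariance of $\rho$. Proposition~\ref{prop:rho-rank-one} then gives $[F,F^\perp]=0$ directly, which proves the last sentence of the theorem.

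If $\rank\rho=0$, then $[E,\dalg]\subset E$ by the definition of $\rho$. Lemma~\ref{lem:radical-action} gives $[\mathfrak v,E]\subset E$. Since $E^\perp=\dalg+\mathfrak v$, we obtain $[E,E^\perp]\subset E$. So $F_0=E$ satisfies the hypotheses of Theorem~\ref{thm:descent}, which produces a nonzero $\h$-invariant $F\subset E$ with $[F,F^\perp]=0$.

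In either case Proposition~\ref{prop:quotient} applies to $F\subset E$: the condition $[F,F^\perp]=0\subset F$ holds, and $F^\perp/F$ is GO of signature $(p-k,q-k)$ with $k=\dim F$. In total index three this index is $3-k$. Since $1\le k\le\dim E=2$, it lies in $\{1,2\}$.

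The only delicate point is the bookkeeping in the case $\rank\rho=0$. Here one must make sure that the hypothesis $[F_0,F_0^\perp]\subset F_0$ of Theorem~\ref{thm:descent} really holds for $F_0=E$. This uses the invariance of $E$ under $\ad(\mathfrak v)$ from Lemma~\ref{lem:radical-action} together with the triviality of $\rho$. All the genuine difficulty has already been absorbed into Proposition~\ref{prop:rho-not-injective}.
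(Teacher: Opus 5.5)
Your proposal is correct and follows the paper's own argument. The paper's proof simply cites Lemma~\ref{lem:radical-action}, Propositions~\ref{prop:rho-not-injective} and~\ref{prop:rho-rank-one}, Theorem~\ref{thm:descent}, and Proposition~\ref{prop:quotient}. Your case split on $\rank\rho$ uses the same results, and your check that $[E,E^\perp]\subset E$ when $\rho=0$, via $E^\perp=\dalg+\mathfrak v$, supplies the step the paper leaves implicit.
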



Then we prove the following.

\begin{theorem}\label{thm:degenerate-complete}
Let $(M = G/H, ds^2)$ be a connected pseudo-Riemannian $G$-geodesic orbit nilmanifold with metric index three, where $G =N \rtimes H$, with $N$ nilpotent. Let $\ip$ denote the inner product on $\n$ induced by $ds^2$. If $\ip|_{\dalg}$ is degenerate, there admits a nonzero $\h$-invariant subspace $F\subset\rad\dalg$ satisfying $[F,F^\perp]=0$. Then $(\n, \ip)$ is a $2\dim F$--dimensional double extension of a metric Lie algebra of strictly smaller index corresponding to a $GO$ nilmanifold.
\end{theorem}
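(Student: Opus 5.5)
The proof is a case analysis on the pair $(r,s)=(\dim E,\ind(\dalg/E))$, where $E=\rad\dalg$ and $\dalg/E$ carries its induced nondegenerate form. Once a nonzero $\h$-invariant $F\subset E$ with $[F,F^\perp]=0$ is found, Proposition~\ref{prop:quotient} and the central-extension observation following it give the double extension of $F^\perp/F$, which has signature $(p-k,q-k)$ with $k=\dim F\ge1$. This quotient is GO and of strictly smaller index. The work is therefore to produce $F$ in every configuration allowed in index three.

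The first step is the index budget. Since $E$ is totally isotropic, $r\le3$. A Witt decomposition adapted to $\dalg$ gives $\n=(E\oplus E')\perp W_0\perp V$, with $W_0\cong\dalg/E$ and $V\cong\dalg^\perp/E$. Hence $3=r+\ind(\dalg/E)+\ind(\dalg^\perp/E)$. The same count applies after reversing the whole metric, with the roles of positive and negative swapped. So the possible triples are bounded, and I enumerate them by $r$ and $s$.

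\emph{Case $s=0$, or the form on $\dalg/E$ definite after reversing the sign of the whole metric.} Here $\ip|_{\dalg}$ is semidefinite. This case is Corollary~\ref{cor:semi-index3}, via Proposition~\ref{thm:semidefinite}. It covers every configuration with $r=3$, since then $s=0$. It also covers every configuration with $r=2$, $s=0$, and every configuration with $r=1$ and $\dalg/E$ definite.

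\emph{Case $r=1$, $s\ge1$.}
\begin{itemize}
\item If $s=1$, or more generally $\dalg/E$ is Lorentzian up to overall sign, Proposition~\ref{prop:lorentz-screen} gives $[e,e^\perp]=0$, so I take $F=E$.
\item If $s=2$, the budget forces $\ind(\dalg^\perp/E)=0$. So $\dalg^\perp/E$ is positive definite, and Theorem~\ref{thm:radical-line-complete} makes $E$ central; again $F=E$.
\item If $\dalg/E$ has signature $(m-2,2)$ but its negative part is the small one after reversal, then the reversed metric has index $\dim\n-3$. I must check that the reversed situation is again of the Lorentzian-screen or definite-complement type. I will handle this by noting that the hypotheses of Proposition~\ref{prop:lorentz-screen} and Theorem~\ref{thm:radical-line-complete} are stated up to overall reversal or carry no signature restriction on the complementary factor.
\end{itemize}

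\emph{Case $r=2$, $s=1$.} Here $\dalg^\perp/E$ must be definite, and Theorem~\ref{thm:radical-plane-complete} supplies $F$. Both of its sub-branches are needed. If $\rank\rho=1$, then $F=\ker\rho$ by Proposition~\ref{prop:rho-rank-one}. If $\rho=0$, then $[E,\dalg]\subset E$. In that situation I show $[E,E^\perp]\subset E$ as in the proof of Proposition~\ref{thm:semidefinite}, using the skew-adjointness of $\ad(Y)|_{\dalg}$ from Lemma~\ref{lem:radical-action}, and then apply Theorem~\ref{thm:descent} with $F_0=E$.

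The main obstacle is not a new computation. It is ensuring that the enumeration is exhaustive, and in particular that the cases where the relevant factor is Lorentzian only after reversing the sign of the whole metric are genuinely covered by the cited statements. It also requires checking that the $\rho=0$ sub-branch for $r=2$ really satisfies the ideal hypothesis of Theorem~\ref{thm:descent}. I would first write out the signature table, $(r,s,\ind(\dalg^\perp/E))$ together with the reversed counterpart of each entry, and attach to each row the result used. Any row left uncovered would be handled by the Engel descent of Theorem~\ref{thm:descent}, after establishing $[E,E^\perp]\subset E$ through the skew-adjointness argument of Lemma~\ref{lem:radical-action} and nilpotency on $\dalg/E$.
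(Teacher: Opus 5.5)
Your proposal is correct and follows the paper's proof: it uses the same $(r,s)$ case split with the index budget $3=r+s+\ind(\dalg^\perp/E)$, sending the semidefinite case to Proposition~\ref{thm:semidefinite}, $(1,1)$ to Proposition~\ref{prop:lorentz-screen}, $(1,2)$ to Theorem~\ref{thm:radical-line-complete}, and $(2,1)$ to Theorem~\ref{thm:radical-plane-complete}. Your unpacking of the $\rho=0$ and $\rank\rho=1$ sub-branches is sound but already contained in the cited theorem. Your worries about overall sign reversal are unnecessary, since with $s$ counting negative squares in index three no reversed configuration arises; they do no harm.
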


\begin{proof}
Put $r=\dim E$ and $s=\ind(\dalg/E)$. Semidefinite restrictions are covered by Proposition~\ref{thm:semidefinite}. In the indefinite case, $r+s\leq3$ leaves $(r,s)=(1,1),(1,2),(2,1)$. Proposition~\ref{prop:lorentz-screen} covers $(1,1)$. For the latter two cases, $\mathfrak v/E$ is positive definite by the index budget. Apply Theorems~\ref{thm:radical-line-complete} and~\ref{thm:radical-plane-complete}, respectively.
\end{proof}

\begin{remark}
 Under the hypotheses of the theorem, repeated application of the lower-index structure theorems yields a $GO$ metric nilpotent Lie algebra $\mathfrak m_0$ of index $q_0 \in \{0,1,2\}$, which is at most two-step nilpotent, such that $\mathfrak n$ is obtained from $\mathfrak m_0$ by a finite sequence of double extensions. The total dimension added is
$$\dim\mathfrak n-\dim\mathfrak m_0=2(3-q_0).$$
Thus the total extension dimension is $2$, $4$, or $6$, according as $\mathfrak m_0$ has index two, one, or zero.
\end{remark}

\section{Example: centrality cannot be strengthened indiscriminately}
\label{s:double}

In this section we give an eight-dimensional example to show why centrality in the orthogonal complement cannot generally be strengthened to global centrality.

\begin{example}\label{ex:plane-noncentral}
On the basis $p,q,a,e,f,u,v,w$ take the nonzero brackets
\[
[p,q]=a,\quad[p,a]=e,\quad[q,a]=f,\quad
[p,e]=u,\quad[p,f]=[q,e]=v,\quad[q,f]=w.
\]
This is the free nilpotent algebra of rank two and step four. Give it the metric
\[
\< p, f\>=1,\quad\< q, e\>=-1,\quad\< a, a\>=\< v, v\>=1,\quad\< u, w\>=-2,
\]
with all other pairings zero. Its signature is $(5,3)$,
\[
\dalg=\Span(a,e,f,u,v,w),\quad E=\Span(e,f),\quad
\operatorname{signature}(\dalg/E)=(3,1).
\]
Here $E^\perp=\dalg$ and $[E,E^\perp]=0$, but $[p,e]=u\ne0$. Thus $E$ need not be central in all of $\n$.

For completeness, an exact GO certificate is as follows. For $Z=xu+yv+zw$, let the derivation $D_Z$ act on both $\Span(p,q)$ and $\Span(e,f)$ by
\[
K_Z=\begin{pmatrix}y&-2x\\2z&-y\end{pmatrix},
\]
annihilate $a$, and act on $\Span(u,v,w)$ by the symmetric-square representation. These are skew derivations. For a vector $T=P+A a+Q+Z$, with $P\in\Span(p,q)$ and $Q\in\Span(e,f)$, use $D(T)=D_Z$.

Identify the two planes with $\mathbb R^2$, with alternating form $\omega(p,q)=1$, so their metric pairing is $\omega(P,Q)$. Write $S(P,Q)=[P,Q]\in\Span(u,v,w)$ and let $B$ be the metric on that last space. Directly from the brackets, the $a$-terms cancel and
\[
\< T, {[T,T']}\>=B\bigl(Z,S(P,Q')-S(P',Q)\bigr).
\]
The matrix above satisfies $\omega(P,K_ZQ)=-B(Z,S(P,Q))$ and $D_ZZ=0$. Skewness then gives $\< T, {D_ZT'+[T,T']}\>=0$ for every $T,T'$, including null $T$. The graph is also equivariant under the displayed $\mathfrak{sl}_2(\mathbb R)$ derivations. 

In this example the isotropy action on $E$ is noncompact, whereas $\rho=0$ because $[E,\dalg]=0$. There is therefore no conflict with the compact action obtained in the proof of Proposition~\ref{prop:rho-not-injective}, where compactness follows under the temporary assumption that $\rho$ is injective. The example confirms that the relative-centrality conclusion of the reduction theorem has the appropriate scope.

\end{example}

\end{document}